\newtheorem{thm}{Theorem}[section]
\newtheorem{prop}[thm]{Proposition}
\newtheorem{cor}[thm]{Corollary}
\theoremstyle{definition}
\newtheorem{defn}[thm]{Definition}
\newtheorem{lem}[thm]{Lemma}
\newtheorem{note}[thm]{Note}
\newtheorem{exam}[thm]{Example}
\theoremstyle{remark}
\numberwithin{equation}{section}
\newcommand{\N}{\Bbb{N}}
\newcommand{\Ann}{{\rm{Ann}}}
\newcommand{\rad}{{\rm{rad}}}
\begin{document}

\makeatletter \oddsidemargin.9375in \evensidemargin \oddsidemargin
\marginparwidth1.9375in \makeatother

\setcounter{page}{1}
\title[Sa-small submodules w.r.t. an arbitrary submodule]{Semiannihilator small submodules with respect to an arbitrary submodule}
\author[S. Rajaee]{S. Rajaee $^{*}$}
\author[F. Farzalipour]{F. Farzalipour}
\author[R. Poyan]{M. Poyan}
\thanks{{\scriptsize
\hskip -0.4 true cm MSC(2010): Primary:13C13, 13C99, 16D80.
\newline Keywords: Small submodule; annihilator small submodule; semiannihilator small submodule.\\
$*$Corresponding author}}
\maketitle
\begin{abstract}
In this paper we introduce a new concept namely $T$-semiannihilator small ($T$-sa-small for short) submodules of an $R$-module $M$ with respect to an arbitrary submodule $T$ of $M$ which is a generalization of the concept of semiannihilator small submodules. We say that a submodule $N$ of $M$ is $T$-sa-small in $M$ provided for each submodule $X$ of $M$ such that
$T\subseteq N+X$ implies that ${\rm Ann}(X)\ll (T:M)$.  We investigate some new results concerning to this class of submodules. Among various results we prove that for a prime module $M$ over a semisimple ring $R$, if $N$ is a sa-small submodule of $M$, then for every submodule $T$ of $M$ such that $N\subsetneq T$, $N$ is also a $T$-sa-small submodule of $M$. For a faithful finitely generated multiplication module $M$, we prove that $N$ is a $T$-sa-small submodule of $M$ if and only if  $(N:M)$ is a $(T:M)$-sa-small ideal of $R$. 
\end{abstract}
\section{Introduction} 
Throughout this paper, $R$ will denote a commutative ring with identity $1\neq 0$ and $M$ a nonzero unital $R$-module. We use the notations $\subseteq $ and $\leq $ to denote inclusion and submodule. A nonempty subset $S$ of $R$ is said to be a {\it multiplicatively closed set} (briefly, m.c.s) of $R$ if $0\notin S$, $1\in S$ and $st\in S$ for each $s, t\in S$. For an $R$-module $M$, the set of all submodules of $M$, denoted by ${\rm L}(M)$ and also ${\rm L}^{*}(M)={\rm L}(M)\setminus \{M\}$.
As usual, the rings of integers and integers modulo $n$ will be denoted by $\mathbb{Z}$ and $\mathbb{Z}_{n}$, respectively. A module $M$ on a ring $R$ (not necessarily commutative) is called {\it prime} if for every nonzero submodule
$K$ of $M$, ${\rm Ann}(K) ={\rm Ann}(M)$. An $R$-module $M$ is called a {\it multiplication module},
if every submodule $N$ of $M$ has the form $N = IM$ for some ideal $I$ of $R$, and in
this case, $N =(N:_{R}M)M$, see \cite{Bar}. A submodule $N$ of $M$ is called {\it small} ({\it superfluous}) which is denoted by $N\ll M$, if for every submodule $L$ of $M$, $N+L = M$, implies that $L = M$. Clearly, the zero submodule of every nonzero module is superfluous. 
The {\it Jacobson radical} of $M$, denoted by ${\rm J}(M)$ is the intersection of all maximal submodules of $M$ and also it is the sum of all small submodules of $M$, i.e.,
${\rm J}(M)=\cap _{\mathfrak{M}\in {\rm Max}(M)}\mathfrak{M}=\sum _{N\ll M}N$. If $M$ does not have maximal submodules, we put ${\rm J}(M)=M$. Consequently, if ${\rm J}(M)$ is a small submodule of $M$, then ${\rm J}(M)$ is the largest small submodule of $M$. If $M$ is a finitely generated nonzero module, then $M\neq {\rm J}(M)$ and ${\rm J}(M)$ is the largest small submodule of $M$. We refer the reader to \cite{AnFu, AT} for the basic properties and more information on small submodules. We know that if  $M$ is a semisimple module, then the zero submodule is the only
small submodule of $M$ and $M$ is the only essential submodule of $M$. 

Gilmer \cite[p.60]{Gil} has defined the concept of {\it cancellation ideal} to be the
ideal $I$ of $R$ which satisfies the following: Whenever $AI = BI$ with $A$ and $B$ are ideals of $R$ implies $A = B$. Mijbass in \cite{Mij} has generalized this concept to
modules. An $R$-module $M$ is called a {\it cancellation module} whenever $IM = JM$
with $I$ and $J$ are ideals of $R$ implies $I = J$.

We recall that $R$ is a {\it von Neumann regular ring} (associative, with 1, not necessarily commutative) if for every element $a$ of $R$, there is an element $b\in R$ with $a = aba$. These rings are characterized by the fact that every left $R$-module is flat.
\section{Preliminaries and Notations}
In \cite{AK}, the authors introduced the concept of {\it annihilator-small submodules} of any right $R$-module $M$. For an unitary right $R$-module $M$ on an associative ring $R$ with identity they called a submodule $K$ of $M_{R}$ {\rm annihilator-small} if $K + T = M$, such that $T$ is a submodule of $M_{R}$, implies that $\ell _{S}(T) = 0$, where $\ell _{S}(T)$ indicates the left
annihilator of $T$ over $S={\rm End}(M_{R})$.

A submodule $N$ of $M$ is called an {\em $R$-annihilator-small} (breifly {\em R-a-small}) {\em submodule} of $M$, if $N+X = M$ for some submodule $X$ of $M$, implies that ${\rm Ann}(X) =0$, see \cite{AL}. We use the notation $N\ll ^{a}M$ to indicate this concept. 
In \cite{Yas}, the author introduced the concept of a semiannihilator small submodule $N$ of a module $M$  on a commutative ring $R$ with identity $1\neq 0$ such that $N$ is called {\it semiannihilator small} ({\it sa-small} for short), denoted by $N\ll ^{sa}M$, if for every submodule $L$ of $M$ with $N+L=M$ implies that ${\rm Ann}(L)\ll R$. It is clear that every R-a-small
submodule is sa-small, but the converse is not true. Also a sa-small submodule need not be small, for example consider $M=\mathbb{Z}$ as a $\mathbb{Z}$-module, every proper submodule of $\mathbb{Z}$ is a sa-small submodule whereas the zero submodule of $\mathbb{Z}$ is the only small submodule of $\mathbb{Z}$.  Let $M$ be a faithful $R$-module, then clearly every small submodule is an R-a-small submodule and hence it is a sa-small submodule. The converse is not true, because consider the $\mathbb{Z}_{8}$-module  $M=\mathbb{Z}_8$, then $\mathbb{Z}_8=\mathbb{Z}_8+\langle \overline{2}\rangle $ and ${\rm Ann}(\langle \overline{2}\rangle )=\langle \overline{4}\rangle \neq \langle \overline{0}\rangle $, whereas ${\rm Ann}(\langle \overline{2}\rangle )\ll \mathbb{Z}_8$.

Similarly, an ideal $I$ is a sa-small ideal of a ring $R$, if it is a sa-small submodule of $R$ as an $R$-module, see \cite[Definition 2.1]{Yas}. We know that an ideal $I$ of $R$ is small in $R$ if and only if $I\subseteq {\rm J}(R )$. Therefore a submodule $N$ of a module $M$ is sa-small in $M$ if $N+X=M$ for a submodule $X$ of $M$, implies that ${\rm Ann}(X)\ll {\rm J}(R)$, see \cite[Proposition 2.3]{Yas}. A non-trivial $R$-module $M$ is called a {\it semiannihilator hollow} ({\it sa-hollow} for short) module if every proper submodule of $M$ is sa-small in $M$, see \cite[Definition 3.1]{Yas}. An $R$-epimorphism $f:M\rightarrow N$ is called a {\it sa-small epimorphism} whenever ${\rm Ker}(f)\ll ^{sa}M$. 

An $R$-module $M$ is said to be a {\it comultiplication module} if
for every submodule $N$ of $M$ there exists an ideal $I$ of $R$ such that
$N ={\rm Ann}_{M}(I)$. An $R$-module $M$ satisfies the {\it double annihilator condition} (DAC
for short), if for each ideal $I$ of $R$, $I ={\rm Ann}_{R}({\rm Ann}_{M}(I))$. Also $M$
is said to be a {\it strong comultiplication module}, if $M$ is a comultiplication module which satisfies DAC, see \cite{AF}.
\section{Sa-small submodules w.r.t. an arbitrary submodule}
In this section we generalize the concept of sa-small submodules to the $T$-sa-small submodules of $M$ with respect to an arbitrary submodule $T$ of $M$. Moreover, we investigate some new other
properties of the sa-small submodules of an $R$-module $M$ and we will generalize these properties to this new class of submodules of $M$. 
\begin{defn}
Let $M$ be an $R$-module and let $T$ be an arbitrary submodule of $M$.
\begin{itemize}
\item[(i)] We say that a submodule $N$ of $M$ is a {\it $T$-semiannihilator small} (berifly, $T$-sa-small) submodule of $M$, denoted by $N\ll _{T}^{sa}M$, provided for every nonzero submodule $X\leq M$ such that $T\subseteq N+X$ implies that ${\rm Ann}(X)\ll (T:_{R}M)={\rm Ann}_{R}(M/T)$. Equivalently, if for a submodule $X$ of $M$, ${\rm Ann}(X)$ is not small in $(T:_{R}M)$, then $T\nsubseteq N+X$. 
\item[(ii)] We say that $M$ is a {\it $T$-sa-small hollow module} if every submodule $N$ of $M$ is a $T$-sa-small submodule of $M$. In particular, for an arbitrary ideal $A$ of $R$, we say that an ideal $I$ of $R$ is an {\it $A$-sa-small ideal} of $R$ if $I$ is an $A$-sa-small submodule of $R$ as an $R$-module. We shall denote the
sum of all $T$-sa-small submodules of $M$ by ${\rm J}_{T}^{sa}(M)$.
\item[(iii)] Let $f:M\rightarrow N$ be an $R$-epimorphism and let $T$ be an arbitrary submodule of $M$, we say that $f$ is a {\it $T$-sa-small epimorphism} in case ${\rm Ker}(f)\ll _{T}^{sa}M$.
\end{itemize}
We denote the set of all small (resp. sa-small, $T$-sa-small) submodules of $M$ by ${\rm S}(M)$ 
(resp. ${\rm S}^{sa}(M)$, ${\rm S}_{T}^{sa}(M)$).
\end{defn}
\begin{note}
Let $M$ be an $R$-module and let $T$ be an arbitrary submodule of $M$.
\begin{itemize}
\item[(i)] If we take $T=M$, then the notions of $T$-sa-small submodules and sa-small submodules are equal.
\item[(ii)] Let $T=0$, then for submodules $N, X$ of $M$, since $0\subseteq N+X$ hence $N\ll _{0}^{sa}M$ implies that ${\rm Ann}(X)\ll {\rm Ann}(M)$. Now since ${\rm Ann}(M)\subseteq {\rm Ann}(X)$ therefore ${\rm Ann}(X)={\rm Ann}(M)$. This is impossible because a nonzero module $M$ is never small in itself. It concludes that a nonzero module $M$ has no any $0$-sa-small submodule. 
\item[(iii)] If $T\neq 0$, then $N\ll ^{sa}_{T}M$ implies that $T\nsubseteq N$, otherwise, $T\subseteq N+0$ conclude that 
$R={\rm Ann}(0)\ll (T:M)$ which is impossible. Let $M$ be a finitely generated
$R$-module and let $T$ be a nonzero proper arbitrary submodule of $M$ then by Zorn's lemma there exists a maximal submodule $N$ of $M$ such that $T\subseteq N$. This implies that $N$ is not a $T$-sa-small submodule of $M$.
\end{itemize}
\end{note}
\begin{thm}\label{t.2.3}
Let $M$ be an $R$-module and let $T$ be a submodule of $M$. The following assertions hold.  
\begin{itemize}
\item[(i)] Every $T$-sa-small submodule of $M$ is a sa-small submodule of $M$.
\item[(ii)] If $M$ is a faithful prime $R$-module, then $M$ is a $T$-sa-small hollow module.
\item[(iii)] If $M$ is a prime module on a semisimple ring $R$ and $N\ll ^{sa}M$, then for every submodule $T\varsupsetneq N$ of $M$, $N\ll _{T}^{sa}M$.
\end{itemize}
\end{thm}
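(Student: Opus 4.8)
The plan is to prove each of the three parts in turn, with the understanding that parts (i) and (ii) are essentially direct from the definitions and the properties of $\mathrm{Ann}$, while part (iii) is the substantive one and will need the structure theory of modules over a semisimple ring.

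For part (i): suppose $N \ll_T^{sa} M$ and let $X \le M$ with $N + X = M$; I want $\mathrm{Ann}(X) \ll R$. Since $N + X = M$ we certainly have $T \subseteq M = N + X$, so by hypothesis $\mathrm{Ann}(X) \ll (T:_R M)$. Now I would invoke the fact recorded just after the definition that an ideal is small in $R$ iff it is contained in $\mathrm{J}(R)$, together with the transitivity of smallness: if $\mathrm{Ann}(X)$ is small in the ideal $(T:_R M)$ then it is small in $R$ (a submodule small in a submodule of $M$ is small in $M$, the standard lemma from \cite{AnFu, AT}, applied to $M = R$). Hence $\mathrm{Ann}(X) \ll R$, i.e.\ $N \ll^{sa} M$.

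For part (ii): let $M$ be faithful and prime, so $\mathrm{Ann}(M) = 0$ and $\mathrm{Ann}(K) = 0$ for every nonzero submodule $K$. Take any submodule $N$ of $M$ and any nonzero $X \le M$ with $T \subseteq N + X$. Since $M$ is prime and $X \neq 0$, $\mathrm{Ann}(X) = \mathrm{Ann}(M) = 0$, which is small in every ideal, in particular in $(T:_R M)$. Thus every $N$ is $T$-sa-small and $M$ is $T$-sa-small hollow. (One should note the edge case $T = 0$ is excluded implicitly by the convention in the Note, or handled trivially.)

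For part (iii), the main work: $R$ is semisimple, $M$ is prime, $N \ll^{sa} M$, and $T \supsetneq N$; I want $N \ll_T^{sa} M$. So take a nonzero $X \le M$ with $T \subseteq N + X$; the goal is $\mathrm{Ann}(X) \ll (T:_R M)$. Over a semisimple ring $R$, every ideal is a direct summand and the only small submodule of any module is $0$, so $\mathrm{J}(R) = 0$ and "small in $(T:_R M)$" forces $\mathrm{Ann}(X) = 0$. Thus the claim reduces to showing $\mathrm{Ann}(X) = 0$, i.e.\ $X$ is faithful. Here is where primeness enters decisively: since $X \neq 0$ and $M$ is prime, $\mathrm{Ann}(X) = \mathrm{Ann}(M)$. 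I would then argue that $\mathrm{Ann}(M) = 0$: because $N \ll^{sa} M$ with $N$ proper (note $T \supsetneq N$ forces $N \neq M$, and actually we need $M \neq 0$), and over a semisimple ring sa-small again means the annihilator of any complement is $0$; one extracts that $M$ itself must be faithful. More directly: since $R$ is semisimple, $M$ decomposes as a direct sum of simple modules; primeness of $M$ forces all these simples to be isomorphic (their annihilators coincide), and then $\mathrm{Ann}(M)$ equals the common annihilator of each simple summand. The hypothesis $N \ll^{sa} M$ applied with a complement of $N$ in $M$ (which exists by semisimplicity) yields that this common annihilator is small in $R$, hence $0$; so $M$ is faithful, whence $\mathrm{Ann}(X) = \mathrm{Ann}(M) = 0 \ll (T:_R M)$.

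The hard part will be pinning down precisely why $N \ll^{sa} M$ over a semisimple ring forces $M$ to be faithful (or, alternatively, finding the cleanest route that sidesteps this by using primeness of $M$ applied to $X$ together with a direct verification that $\mathrm{Ann}(M)$ itself is killed). The delicate points are: (a) confirming the reduction "small in $(T:_RM)$ $\iff$ zero" is legitimate, which needs $\mathrm{J}(R) = 0$ for semisimple $R$ and the transitivity lemma for smallness; and (b) making sure the complement-of-$N$ argument is valid, i.e.\ that semisimplicity of the ring indeed gives semisimplicity of $M$ so that $N$ has a complement $X_0$ with $N \oplus X_0 = M$, to which the sa-smallness of $N$ applies and yields $\mathrm{Ann}(X_0) = 0$; then relate $\mathrm{Ann}(X_0)$ to $\mathrm{Ann}(M)$ via primeness. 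Once $M$ is faithful and prime, part (ii) already gives that $M$ is $T$-sa-small hollow, so $N$ is automatically $T$-sa-small — indeed part (iii) may be deducible from part (ii) once faithfulness of $M$ is established, which is the cleanest packaging.
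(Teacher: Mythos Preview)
Your proofs of (i) and (ii) coincide with the paper's. For (iii) you take a genuinely different route. The paper complements $T$, not $N$: using semisimplicity of $M$ it writes $M=T+T'$, so $T\subseteq N+K$ yields $M=N+(K+T')$; sa-smallness of $N$ then gives $\mathrm{Ann}(K+T')\ll R$, primeness identifies $\mathrm{Ann}(K+T')=\mathrm{Ann}(K)$, and since $\mathrm{Ann}(K)=\mathrm{Ann}(M)\subseteq (T:M)$ with $(T:M)$ a direct summand of the semisimple ring $R$, one descends to $\mathrm{Ann}(K)\ll (T:M)$. You instead complement $N$: from $M=N\oplus X_0$ and sa-smallness you get $\mathrm{Ann}(X_0)\ll R$, hence $\mathrm{Ann}(X_0)=0$ since $\mathrm{J}(R)=0$; primeness (with $X_0\neq 0$ because $T\supsetneq N$ forces $N\neq M$) then gives $\mathrm{Ann}(M)=0$, so $M$ is faithful and part (ii) applies. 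Your argument is correct and slightly more economical; it also extracts the stronger byproduct that the hypotheses of (iii) already force $M$ to be faithful, so in fact $M$ is $T$-sa-small hollow for \emph{every} nonzero $T$, not merely $N\ll_T^{sa}M$ for the given one. The paper's argument, by contrast, stays tied to the specific $T$ and $K$ and does not isolate faithfulness as an intermediate conclusion.
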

\begin{proof}
(i) Assume that $N\ll _{T}^{sa}M$ and $N+K=M$ for some module $K$ of $M$. Since $T\subseteq N+K$ and $N\ll ^{sa}_{T}M$ hence ${\rm Ann}(K)\ll (T:M)\leq R$. By virtue of \cite[Remark 2.8, (2)]{AT}, ${\rm Ann}(K)\ll R$ this implies that $N\ll ^{sa}M$.\\
(ii) By hypothesis, $T\subseteq N+X$ implies that $0={\rm Ann}(X)={\rm Ann}(M)$ which is small in $(T:M)$.\\
(iii) By \cite[Proposition 3.5]{AT}, $M$ is a semisimple module, hence $M=T+T'$ for some submodule $T'$ of $M$. Suppose that $N\ll ^{sa}M$ and $T\subseteq N+K$ for some submodule $K$ of $M$. This implies that $M=T+T'\subseteq N+K+T'$ and so $N+K+T'=M$. Since $N\ll ^{sa}M$ hence ${\rm Ann}(K+T')\ll R$. Since $M$ is prime hence
\[{\rm Ann}(K+T')={\rm Ann}(K)\subseteq (T:M)\subseteq (T:N)=R.\]
By virtue of \cite[Remark 2.8, (3)]{AT}, ${\rm Ann}(K)\ll (T:M)$ since $R$ is semisimple and the proof is complete.
\end{proof}
The following example shows that in general the concepts of small submodules and sa-small submodules are indepentent of each other.  In (iii) we show that the converse of Theorem \ref{t.2.3} is not true in general and also in (iv) we show that for $R$-modules $M, M'$, if $f:M\rightarrow M'$ is an $R$-epimorphism, then the image of a $T$-sa-small submodule of $M$ need not be an $f(T)$-sa-small submodule in $M'$. 
\begin{exam}
Let $\mathbb{Z}$ and  $\mathbb{Z}_{n}$ be the rings of integers and integers modulo $n$, respectively. 
\begin{itemize}
\item[(i)] Consider $M=\mathbb{Z}_{6}$ as a $\mathbb{Z}$-module. Since ${\rm Ann}(\mathbb{Z}_{6})=6\mathbb{Z}$
is not a small ideal of $\mathbb{Z}$, hence $\langle \overline{0}\rangle \notin {\rm S}^{sa}(\mathbb{Z}_{6})$ whereas 
$\langle \overline{0}\rangle \in {\rm S}(\mathbb{Z}_{6})$.  We note that the only nonzero proper submodules of $\mathbb{Z}_{6}$ are $N=\langle \overline{2}\rangle $ and $L=\langle \overline{3}\rangle $ where $N+L=\mathbb{Z}_{6}$ and both of ${\rm Ann}(N)=3\mathbb{Z}$ and ${\rm Ann}(L)=2\mathbb{Z}$ are not small ideals of $\mathbb{Z}$. It concludes that  ${\rm S}^{sa}(\mathbb{Z}_{6})=\emptyset $ whereas ${\rm S}(\mathbb{Z}_{6})=\{\langle \overline{0}\rangle \}$.
\item[(ii)] Take $M=\mathbb{Z}$ as a $\mathbb{Z}$-module. We know that $k\mathbb{Z}+s\mathbb{Z}=\mathbb{Z}$ if and only if $(k, s)=1$. Since for every submodule $s\mathbb{Z}$ of $\mathbb{Z}$, ${\rm Ann}_{\mathbb{Z}}(\mathbb{Z})={\rm Ann}_{\mathbb{Z}}(s\mathbb{Z})=0\ll \mathbb{Z}$ hence $k\mathbb{Z}\ll ^{sa}\mathbb{Z}$ for every proper submodule $k\mathbb{Z}$ of $\mathbb{Z}$. It concludes that ${\rm S}^{sa}(\mathbb{Z})={\rm L}^{*}(\mathbb{Z})$ whereas 
 ${\rm S}(\mathbb{Z})=\{0\}$.  
\item[(iii)] 
We take the $\mathbb{Z}$-module $M=2\mathbb{Z}\times \mathbb{Z}_{8}$. Then $N=\langle (0,\bar{0})\rangle $ is a sa-small submodule of $M$ but $N$ is not a $T$-sa-small submodule of $M$ for submodule $T=\langle (0,\bar{4})\rangle $ of $M$ since $T\subseteq N+\langle (0,\bar{2})\rangle $ whereas ${\rm Ann}(\langle (0,\bar{2})\rangle )=4\mathbb{Z}$ is not small in $(T:M)=0$.
\item[(iv)] Consider the natural $\mathbb{Z}$-epimorphism $\pi :\mathbb{Z}\longrightarrow \mathbb{Z}_{8}$ where $\pi (n)= \overline{n}$. Take $N=0$ and $T=2\mathbb{Z}$. Clearly $0\ll _{2\mathbb{Z}}^{sa}\mathbb{Z}$, because we have
$2\mathbb{Z}\subseteq 0+2\mathbb{Z}$ and also $2\mathbb{Z}\subseteq 0+\mathbb{Z}$ and then
\[0={\rm Ann}(2\mathbb{Z})\ll (2\mathbb{Z}:_{\mathbb{Z}}\mathbb{Z})=2\mathbb{Z},\]
\[0={\rm Ann}(\mathbb{Z})\ll (2\mathbb{Z}:_{\mathbb{Z}}\mathbb{Z})=2\mathbb{Z}.\]
But $\pi (N)=\pi (0)=\langle \bar{0}\rangle $ is not $\pi (T)$-sa-small submodule of  $\mathbb{Z}_{8}$ since $\pi (T)=\langle \bar{2}\rangle $ and $\langle \bar{2}\rangle \subseteq \langle \bar{0}\rangle+\mathbb{Z}_{8}$ whereas
${\rm Ann}(\mathbb{Z}_{8})=8\mathbb{Z}$ is not small in $(\langle \bar{2}\rangle:\mathbb{Z}_{8})=2\mathbb{Z}$. 
\end{itemize}
\end{exam}
\begin{note}
Let $M$ be an $R$-module and let $T$ be an arbitrary submodule of $M$. For every submodule $K$ of $M$, $K\ll ^{sa}_{T}M$ if and only if $R$-epimorphism $p_{_{K}}:M\rightarrow M/K$ is a $T$-sa-small epimorphism. 
\end{note}
A submodule $N$ of an $R$-module $M$ is said to be {\it completely irreducible} if
$N=\cap _{i\in I}N_{i}$ where $\{N_{i}\}_{i\in I}$ is a family of submodules of $M$, then $N = N_{i}$ for
some $i\in I$. It is easy to see that every submodule of $M$ is an intersection of
completely irreducible submodules of $M$.
\begin{thm}\label{t2.5}
Let $M$ be an $R$-module and $N\leq M$.
\begin{itemize}
\item[(i)] Let $T\lneq T'\lneq M$. If $N\ll ^{sa}_{T}M$, then $N\ll ^{sa}_{T'}M$. 
\item[(ii)] If $T=T_{1}\,\cap \cdots \,\cap T_{s}$ and $N\ll ^{sa}_{T}M$, then $N\ll ^{sa}_{T_{i}}M$ for any $1\leq i\leq s$. Conversely, if $T$ is a completely irreducible submodule and for any $1\leq i\leq s$, $N\ll ^{sa}_{T_{i}}M$, then $N\ll ^{sa}_{T}M$.
\item[(iii)] If $T\leq K$ and $N\ll ^{sa}_{T}M$, then $N\ll ^{sa}_{T}K$.  
\item[(iv)] If $T=T_{1}+\cdots +T_{n}$ for some submodules $T_{i}$ of $M$ and $N\leq_{T_{i}}^{sa}M$ for some $1\leq i\leq n$, then $N\leq_{T}^{sa}M$.
\end{itemize}
\end{thm}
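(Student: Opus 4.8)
The plan is to reduce all four parts to one monotonicity principle for superfluous submodules --- if $A \ll B$ and $B \leq C$ then $A \ll C$ (\cite[Remark 2.8, (2)]{AT}) --- together with two trivial facts about colon ideals: if $T \subseteq T'$ are submodules of $M$ then $(T:_{R}M) \subseteq (T':_{R}M)$, and if $K$ is a submodule of $M$ with $T \leq K$ then $(T:_{R}M) \subseteq (T:_{R}K)$. In every case the recipe is the same: take the relevant test submodule $X$, use the hypothesis to place ${\rm Ann}(X)$ small inside the ``smaller'' colon ideal, and then enlarge the ambient ideal by the monotonicity principle.

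For (i): let $X$ be a nonzero submodule of $M$ with $T' \subseteq N + X$. Then $T \subseteq T' \subseteq N + X$, so $N \ll_{T}^{sa} M$ gives ${\rm Ann}(X) \ll (T:_{R}M) \subseteq (T':_{R}M)$, hence ${\rm Ann}(X) \ll (T':_{R}M)$, i.e. $N \ll_{T'}^{sa} M$. The forward part of (ii) is the same computation with $T'=T_{i}$ and the inclusion $T = T_{1} \cap \cdots \cap T_{s} \subseteq T_{i}$. Part (iv) is again the same, now using $T_{i} \subseteq T_{1} + \cdots + T_{n} = T$: if $N \ll_{T_{i}}^{sa} M$ and $X$ is a nonzero submodule with $T \subseteq N + X$, then $T_{i} \subseteq N + X$, so ${\rm Ann}(X) \ll (T_{i}:_{R}M) \subseteq (T:_{R}M)$, and therefore ${\rm Ann}(X) \ll (T:_{R}M)$.

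For (iii) (read with $N \leq K$, which the conclusion presupposes): let $X$ be a nonzero submodule of $K$ with $T \subseteq N + X$. Since $X \leq K \leq M$, this same $X$ is an admissible test submodule for $N \ll_{T}^{sa} M$, so ${\rm Ann}(X) \ll (T:_{R}M) \subseteq (T:_{R}K)$, whence ${\rm Ann}(X) \ll (T:_{R}K)$, i.e. $N \ll_{T}^{sa} K$. Finally, the converse half of (ii) is immediate: if $T$ is completely irreducible and $T = T_{1} \cap \cdots \cap T_{s}$, then $T = T_{j}$ for some $j$ by the definition of complete irreducibility, so the hypothesis $N \ll_{T_{j}}^{sa} M$ is verbatim the conclusion $N \ll_{T}^{sa} M$.

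The step requiring the most care --- though it is not a deep obstacle --- is keeping the orientation of the colon ideals straight: enlarging the submodule $T$ enlarges $(T:_{R}M)$, while shrinking the module $M$ down to $K$ also enlarges $(T:_{R}K)$, and the monotonicity principle only transports smallness into a larger ambient ideal, never a smaller one. This is also exactly why the completely irreducible hypothesis in the converse of (ii) cannot be dropped: in general $(T_{1} \cap \cdots \cap T_{s}:_{R}M)$ is strictly smaller than each $(T_{i}:_{R}M)$, and smallness need not descend to a smaller ambient ideal, so finitely many $T_{i}$-sa-small conditions do not by themselves force the intersection condition.
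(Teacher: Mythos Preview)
Your proof is correct and follows essentially the same route as the paper: each part reduces to the monotonicity principle ${\rm Ann}(X)\ll (T:M)\leq (T':M)\Rightarrow {\rm Ann}(X)\ll (T':M)$ via \cite[Remark 2.8, (2)]{AT}, together with the obvious inclusions of colon ideals, and the converse of (ii) uses complete irreducibility to force $T=T_{j}$ for some $j$. Your parenthetical that (iii) tacitly needs $N\leq K$, and your closing remark on why the completely irreducible hypothesis cannot be dropped, are correct observations that the paper does not make explicit but are consistent with it.
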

\begin{proof}
(i) Assume that $T'\subseteq N+X$ for some submodule $X$ of $M$, then $T\subseteq N+X$ and so ${\rm Ann}(X)\ll (T:M)\leq (T':M)$. By \cite[Remark 2.8, (2)]{AT}, ${\rm Ann}(X)\ll (T':M)$ as we needed.\\
(ii) It is clear by (i). Conversely, since  $T$ is a completely irreducible submodule hence there exists $1\leq i\leq s$ such that 
$T_{i}=T$ and the proof is complete again using by (i). \\
(iii) Assume that $T\subseteq N+L$ for some submodule $L$ of $K$. Then ${\rm Ann}(L)\ll (T:M)\leq (T:K)$ and this implies that ${\rm Ann}(L)\ll (T:K)$ and therefore $N\ll ^{sa}_{T}K$.\\
(iv) The proof is straightforward by (i).
\end{proof}
\begin{thm}
The following assertions hold.
\begin{itemize}
\item[(i)] ${\rm S}(R)\subseteq {\rm S}^{sa}(R)$.
\item[(ii)] If $R$ is an Artinian ring and $I\ll _{{\rm J}(R)}^{sa}R$, then for every maximal ideal $\mathfrak{m}$ of $R$,  $I\ll _{\mathfrak{m}}^{sa}R$.
\item[(iii)] Let $M=N\oplus K$ be a multiplication module such that $N, K$ are finitely generated submodules of $M$. Then $N$ and $K$ are not sa-small in $M$. 
\end{itemize}
\begin{proof}
(i) Let $I\in {\rm S}(R)$ and $I+J=R$ for some ideal $J$ of $R$. Then $J=R$ and hence ${\rm Ann}(J)={\rm Ann}(R)=0$ which is a small ideal of $R$. Therefore $I\in {\rm S}^{sa}(R)$.\\
(ii) Assume that ${\rm Max}(R)=\{\mathfrak{m}_{1}, \cdots , \mathfrak{m}_{s}\}$, then ${\rm J}(R)=\cap _{i=1}^{s}\mathfrak{m}_{i}$. The proof is clear by Theorem \ref{t2.5} (i).\\
(iii) By \cite[Corrollary 2.3]{Bast}, ${\rm Ann}(N)+{\rm Ann}(K)=R$. If $N\ll ^{sa}M$, then since $M=N+K$ hence ${\rm Ann}(K)\ll R$ and so ${\rm Ann}(N)=R$. It concludes that $N=0$ which is impossible.
\end{proof}
\end{thm}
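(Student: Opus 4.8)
The plan is to treat the three parts of the final Theorem in sequence, each being a short deduction from material already established.

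For part (i), I would take a small ideal $I \in \mathrm{S}(R)$, suppose $I + J = R$ for an ideal $J$, and observe that since $I \ll R$ this forces $J = R$; hence $\mathrm{Ann}(J) = \mathrm{Ann}(R) = 0$, which is a small ideal of $R$ (the zero submodule is always small), so $I \ll^{sa} R$, i.e. $I \in \mathrm{S}^{sa}(R)$. For part (ii), I would use that an Artinian ring has only finitely many maximal ideals $\mathfrak{m}_1, \dots, \mathfrak{m}_s$, so $\mathrm{J}(R) = \bigcap_{i=1}^{s} \mathfrak{m}_i$; then for each $j$ we have $\mathrm{J}(R) \subsetneq \mathfrak{m}_j$ (when $s \geq 2$; the case $s = 1$ is trivial since then $\mathrm{J}(R) = \mathfrak{m}_1$), and Theorem~\ref{t2.5}(i) applied with $T = \mathrm{J}(R)$ and $T' = \mathfrak{m}_j$ gives $I \ll^{sa}_{\mathfrak{m}_j} R$ from $I \ll^{sa}_{\mathrm{J}(R)} R$.

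For part (iii), the key input is the cited result \cite[Corollary 2.3]{Bast}: for a multiplication module $M = N \oplus K$ with $N, K$ finitely generated, one has $\mathrm{Ann}(N) + \mathrm{Ann}(K) = R$. I would argue by contradiction: suppose $N \ll^{sa} M$. Since $M = N + K$, the definition of sa-small gives $\mathrm{Ann}(K) \ll R$, i.e. $\mathrm{Ann}(K) \subseteq \mathrm{J}(R)$. Combined with $\mathrm{Ann}(N) + \mathrm{Ann}(K) = R$, a small ideal summing with $\mathrm{Ann}(N)$ to all of $R$ forces $\mathrm{Ann}(N) = R$, whence $N = 0$, contradicting that $N$ is a nonzero direct summand (a direct summand of a nonzero module in this finitely generated multiplication setting cannot be zero and still have the decomposition be nontrivial — strictly one should note $N = 0$ would make $M = K$, and the intended reading is that $N$ is a proper nonzero summand). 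The symmetric argument rules out $K \ll^{sa} M$.

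The main obstacle is really just bookkeeping about degenerate cases rather than any substantive difficulty: in (ii) one must handle $s = 1$ separately so that the strict inclusion hypothesis of Theorem~\ref{t2.5}(i) is not vacuously violated, and in (iii) one must be careful about what "$N$ is not sa-small" means when $N$ could a priori be $0$ (the zero submodule is always small, hence sa-small by part (i)-type reasoning, so the statement is implicitly about nonzero proper summands). Modulo these caveats, every step is a one-line consequence of the definition of $T$-sa-small, the characterization of small ideals as those contained in $\mathrm{J}(R)$, and the two external results cited (\cite[Remark 2.8]{AT} is not even needed here, only the cited corollary from \cite{Bast} and Theorem~\ref{t2.5}(i) from this paper).
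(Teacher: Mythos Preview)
Your proposal is correct and follows essentially the same route as the paper's proof: part (i) is verbatim the same argument, part (ii) invokes the finiteness of $\mathrm{Max}(R)$ for Artinian $R$ and Theorem~\ref{t2.5}(i) exactly as the paper does, and part (iii) uses \cite[Corollary 2.3]{Bast} together with $M=N+K$ and the definition of sa-small to force $\mathrm{Ann}(N)=R$, just as in the paper. Your added attention to the degenerate cases ($s=1$ in (ii), and $N=0$ in (iii)) goes slightly beyond what the paper spells out, but does not change the approach.
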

\begin{thm}
Let $M$ be an $R$-module. The following assertions hold.
\begin{itemize}
\item[(i)] $M\notin {\rm S}^{sa}(M)$.
\item[(ii)] $0\in {\rm S}^{sa}(M)$ if and only if ${\rm Ann}(M)\ll R$.
\item[(iii)] If $R$ is a simple ring, then ${\rm L}^{*}(M)={\rm S}^{sa}(M)$.
\item[(iv)] Let $\mathfrak{m}\in {\rm Max}(R)\cap {\rm S}^{sa}(R)$. If $x\notin \mathfrak{m}$, then 
${\rm Ann}(Rx)\subseteq {\rm J}(R)$.
\item[(v)]  If $M$ is a prime module with ${\rm Ann}(M)\ll R$, then ${\rm L}^{*}(M)={\rm S}^{sa}(M)$. Moreover, in this case, ${\rm S}({M})\subseteq {\rm S}^{sa}(M)$.
\end{itemize}
\end{thm}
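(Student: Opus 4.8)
The plan is to dispose of the five items in turn, each reducing directly to the definition of a sa-small submodule together with two elementary facts: a nonzero module is never small in itself, and an ideal of $R$ is small precisely when it is contained in ${\rm J}(R)$.

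For (i), I would note that $M + 0 = M$, so if $M \ll^{sa} M$ held then ${\rm Ann}(0) = R \ll R$, which is absurd since $1 \neq 0$. A useful by-product, which I will reuse in (iii) and (v), is that every sa-small submodule of $M$ is proper, i.e. ${\rm S}^{sa}(M) \subseteq {\rm L}^{*}(M)$. For (ii), the forward implication is obtained by taking $X = M$ in the defining condition $0 + X = M$, yielding ${\rm Ann}(M) \ll R$; conversely, if $0 + X = M$ then $X = M$ and ${\rm Ann}(X) = {\rm Ann}(M) \ll R$ by hypothesis, so $0 \ll^{sa} M$.

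For (iii), since $R$ is simple its only ideals are $0$ and $R$, hence ${\rm J}(R) = 0$ and the only small ideal of $R$ is $0$. Given a proper submodule $N$ and a submodule $X$ with $N + X = M$, necessarily $X \neq 0$ (otherwise $N = M$), so ${\rm Ann}(X) \neq R$ and therefore ${\rm Ann}(X) = 0 \ll R$; thus $N \ll^{sa} M$. Combined with the inclusion ${\rm S}^{sa}(M) \subseteq {\rm L}^{*}(M)$ from (i), this gives ${\rm L}^{*}(M) = {\rm S}^{sa}(M)$. Item (iv) is immediate: $x \notin \mathfrak{m}$ and maximality of $\mathfrak{m}$ force $\mathfrak{m} + Rx = R$, and since $\mathfrak{m} \ll^{sa} R$ we conclude ${\rm Ann}(Rx) \ll R$, that is, ${\rm Ann}(Rx) \subseteq {\rm J}(R)$.

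For (v), let $N$ be a proper submodule and $X$ a submodule with $N + X = M$; as above $X \neq 0$, and since $M$ is prime, ${\rm Ann}(X) = {\rm Ann}(M) \ll R$ by hypothesis, so $N \ll^{sa} M$. Together with ${\rm S}^{sa}(M) \subseteq {\rm L}^{*}(M)$ this yields ${\rm L}^{*}(M) = {\rm S}^{sa}(M)$, and since a nonzero module is not small in itself we also have ${\rm S}(M) \subseteq {\rm L}^{*}(M) = {\rm S}^{sa}(M)$. All the arguments are short; the only step requiring any care is verifying in (iii) and (v) that the complementary submodule $X$ is genuinely nonzero before invoking simplicity of $R$ or primeness of $M$, and this is settled at once by the properness of $N$.
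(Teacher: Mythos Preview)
Your proof is correct and follows essentially the same approach as the paper's own proof, which handles (i), (iii), and (iv) by the same direct arguments and simply declares (ii) ``straightforward'' and (v) ``clear.'' If anything, you have supplied more detail than the paper does, particularly in spelling out (ii), (v), and the reverse inclusion ${\rm S}^{sa}(M)\subseteq {\rm L}^{*}(M)$ needed for the equalities in (iii) and (v).
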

\begin{proof}
(i) Assume that $M\in {\rm S}^{sa}(M)$, then the equality $M+0=M$, implies that ${\rm Ann}(0)=R$ is a small ideal of $R$ which is impossible.\\
(ii) The proof is straightforward.\\
(iii) Let $R$ be a simple ring and let $N$ be a proper submodule of $M$ such that $N+X=M$ for some submodule $X$ of $M$. If ${\rm Ann}(X)=0$, then ${\rm Ann}(X)\ll R$ and the proof is complete. If ${\rm Ann}(X)=R$, then $X=0$ and hence $N=M$ which is contradiction.\\
(iv) Suppose that $x\in R-\mathfrak{m}$, then $\mathfrak{m}+Rx=R$ hence ${\rm Ann}(Rx)\ll R$ since $\mathfrak{m}\in {\rm S}^{sa}(R)$. This implies that ${\rm Ann}(Rx)\subseteq {\rm J}(R)$.\\
(v) It is clear.
\end{proof}
\begin{cor}
Every proper submodule of a faithful prime module $M$ is a sa-small submodule of $M$. 
\end{cor}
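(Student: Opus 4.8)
The plan is to check the defining property of a sa-small submodule directly. Let $N$ be a proper submodule of $M$, and suppose $N+L=M$ for some submodule $L$ of $M$; I need to show that ${\rm Ann}(L)\ll R$. The crucial observation is that $L\neq 0$: indeed, if $L=0$ then $N=N+0=M$, contradicting the assumption that $N$ is proper. Hence $L$ is a nonzero submodule of the prime module $M$, so ${\rm Ann}(L)={\rm Ann}(M)$, and since $M$ is faithful, ${\rm Ann}(L)={\rm Ann}(M)=0$. As the zero ideal is small in every ring, this gives ${\rm Ann}(L)=0\ll R$, which is precisely what the definition of $N\ll ^{sa}M$ demands.

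Two shortcuts are also available. First, the statement is an immediate special case of part (v) of the preceding theorem: a faithful module satisfies ${\rm Ann}(M)=0\ll R$, so that result yields ${\rm L}^{*}(M)={\rm S}^{sa}(M)$, and in particular every proper submodule belongs to ${\rm S}^{sa}(M)$. Second, one could combine Theorem \ref{t.2.3}(ii), which says that a faithful prime module is $T$-sa-small hollow, with Theorem \ref{t.2.3}(i), which passes from being $T$-sa-small to being sa-small; in this route one should restrict attention to proper submodules so as not to stumble on the degenerate case $N=M$.

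I do not expect any genuine obstacle here. The only nontrivial step is noticing that the complement $L$ cannot be zero, and once that is in hand the hypotheses of primeness and faithfulness do all the remaining work. I would present the direct verification above, as it is the shortest argument and avoids invoking the heavier machinery developed earlier in the section.
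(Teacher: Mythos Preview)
Your proposal is correct, and the paper itself gives no explicit proof, treating the corollary as an immediate consequence of part~(v) of the preceding theorem---precisely the shortcut you identify. Your direct verification simply unpacks that argument, so the approaches coincide.
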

\begin{thm}\label{t2.3}
Let $R$ be a commutative ring. The following statments are true.
\begin{itemize}
\item[(i)] If $R$ has a nonzero comaximal sa-small ideal, then $R$ is not semisimple.
\item[(ii)] Every sa-small hollow semisimple ring is simple.
\item[(iii)] Let $R$ be a sa-small hollow ring. If $x\in {\rm Z}(R)$, then $R\neq Rx +Ry$ for some element $y\in R$ and also $1$ is the only nonzero idempotent element of $R$.
\item[(iv)] If $R$ is a von Neumann regular ring, then none of the finitely generated ideals of $R$ is a sa-small ideal of $R$.
\item[(v)] Let $R$ be an integral domain and let $M$ be a faithful multiplication module, then ${\rm L}^{*}(M)={\rm S}^{sa}(M)$. 
\end{itemize}
\end{thm}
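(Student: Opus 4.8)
The plan is to treat the five parts separately; the common tool is the description of sa-smallness of an ideal: $I\ll^{sa}R$ means exactly that $I+J=R$ forces ${\rm Ann}(J)\ll R$, i.e. ${\rm Ann}(J)\subseteq{\rm J}(R)$. To this I add three standard facts: ${\rm J}(R)=0$ when $R$ is semisimple and when $R$ is von Neumann regular; ${\rm J}(R)$ contains no nonzero idempotent (if $e^{2}=e\in{\rm J}(R)$ then $1-e$ is a unit and $(1-e)e=0$ gives $e=0$); and every finitely generated ideal of a von Neumann regular ring is generated by an idempotent. For (i), read ``a nonzero comaximal sa-small ideal'' as a nonzero sa-small ideal $I$ with $I+J=R$ for some proper ideal $J$: if $R$ were semisimple, sa-smallness would give ${\rm Ann}(J)\subseteq{\rm J}(R)=0$, yet $J$ is a direct summand $R=J\oplus J'$ with $0\neq J'$ and $JJ'\subseteq J\cap J'=0$, so $0\neq J'\subseteq{\rm Ann}(J)$, a contradiction; hence $R$ is not semisimple. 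For (ii), if $R$ is sa-small hollow (every proper ideal sa-small) and semisimple but not simple, a proper nonzero ideal $I$ is a direct summand $R=I\oplus J$ with $J$ proper and nonzero and $I$ sa-small, so (i) forbids semisimplicity, a contradiction; thus $R$ is simple.

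\emph{Part (iii).} I would settle the idempotent assertion first, since the zero-divisor statement uses it. If $e$ is a nonzero idempotent with $e\neq1$, then $Re$ is a proper, hence sa-small, ideal; from $Re+R(1-e)=R$ and ${\rm Ann}\bigl(R(1-e)\bigr)=Re$ (clear, since $e(1-e)=0$ and $r(1-e)=0\Rightarrow r=re$) we get $Re\subseteq{\rm J}(R)$, impossible as $e\neq0$ is then an idempotent of ${\rm J}(R)$. Hence $1$ is the only nonzero idempotent of $R$. Now let $x$ be a nonzero element of ${\rm Z}(R)$ and pick $y\neq0$ with $xy=0$; I claim $Rx+Ry\neq R$. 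Otherwise $1=rx+sy$ for some $r,s$; put $e=rx$ and $f=sy=1-e$. Then $ef=rs\,xy=0$, so $e=e(e+f)=e^{2}$ is idempotent and hence $e\in\{0,1\}$. But $e=0$ gives $x=x(rx+sy)=(rx)x+sxy=0$ and $e=1$ gives $y=y(rx+sy)=rxy+(sy)y=0$, both excluded. So $Rx+Ry\neq R$.

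\emph{Parts (iv) and (v).} For (iv), write a finitely generated ideal as $I=Re$ with $e$ idempotent; if $I\ll^{sa}R$, then as above $Re={\rm Ann}\bigl(R(1-e)\bigr)\subseteq{\rm J}(R)=0$, so $I=0$. Hence no nonzero finitely generated ideal of $R$ is sa-small. For (v), recall $M\notin{\rm S}^{sa}(M)$, so it is enough to show every proper submodule $N$ of $M$ is sa-small. If $N+X=M$ then $X\neq0$ (else $N=M$); as $M$ is multiplication, $X=(X:M)M$, so each $a\in{\rm Ann}(X)$ satisfies $\bigl(a(X:M)\bigr)M=aX=0$, i.e. $a(X:M)\subseteq{\rm Ann}(M)=0$ by faithfulness. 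Since $R$ is a domain and $(X:M)\neq0$ (otherwise $X=0$), this forces $a=0$; thus ${\rm Ann}(X)=0\ll R$, $N\ll^{sa}M$, and ${\rm L}^{*}(M)={\rm S}^{sa}(M)$.

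The only genuinely delicate point is in (iii): one must spot that $e=rx$ is idempotent, which rests on $ef=rs\,xy=0$, and must order the two sub-claims correctly. Every other step is a direct substitution into the facts recorded at the outset, so I anticipate no real obstruction.
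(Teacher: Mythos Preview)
Your proof is correct in all five parts and largely parallels the paper's arguments, but your treatment of (iii) takes a genuinely different route that is worth noting. For the zero-divisor claim the paper argues more directly: from $Rx+Ry=R$ and $Ry$ proper (hence sa-small), one gets ${\rm Ann}(Rx)\ll R$; since $Ry\subseteq{\rm Ann}(Rx)$ this forces $Ry\ll R$, whence $Rx=R$ and $x$ is a unit, contradicting $x\in{\rm Z}(R)$. Your argument instead first proves the idempotent assertion and then, writing $1=rx+sy$, observes that $e=rx$ satisfies $e(1-e)=rs\,xy=0$, so $e$ is idempotent and hence $e\in\{0,1\}$, each case killing $x$ or $y$. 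Both are valid; the paper's is shorter and avoids the dependency between the two sub-claims, while yours has the virtue of isolating a pleasant algebraic trick. For (i) and (iv) your use of ${\rm J}(R)=0$ in the semisimple and von Neumann regular cases is a mild variant of the paper's direct-summand bookkeeping (the paper shows $I\cap J\neq 0$ and then invokes that argument again for (iv)); the content is the same. Part (v) is essentially identical to the paper. One cosmetic point: in (iv) you correctly conclude that no \emph{nonzero} finitely generated ideal is sa-small; the zero ideal is always sa-small in $R$, so the paper's statement should be read with that implicit restriction.
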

\begin{proof}
(i) Suppose that $I$ is a nonzero comaximal sa-small ideal of $R$, then there exists an ideal $J$ of $R$ with $I+J=R$. We claim that $I\cap J\neq 0$. Assume $I\cap J=0$ and so $IJ=0$. Hence $R=I+J\subseteq {\rm Ann}(J)+{\rm Ann}(I)$, then ${\rm Ann}(I)+{\rm Ann}(J)=R$. Now $I\ll ^{sa}R$, implies that ${\rm Ann}(J)\ll R$ and thus ${\rm Ann}(I)=R$. It concludes that $I=0$ which is a contradiction. We infer that $I$ is not a direct summand of $R$ and so $R$ is not semisimple.\\
(ii) Suppose that $I$ is a nonzero ideal of $R$, then $I$ is a direct summand of $R$ since $R$ is semisimple. By (i), $I$ is not a sa-small ideal of $R$ and this is contradiction because $R$ is a sa-small hollow ring. It concludes that $R$ has no any nonzero ideal and so $R$ is a simple ring.\\
(iii) Assume that $x\in {\rm Z}(R)$, then there exists an element $0\neq y\in R$ such that $xy=0$. Let $R=Rx+Ry$, then 
$Ry\subseteq {\rm Ann}(Rx)\ll R$. This implies that $Ry \ll R$ hence $Rx=R$ and so $x\in {\rm U}(R)$ which is contradiction.\\
Now let $e$ be an idempotent element of $R$. Since $R(1-e)+Re=R$ therefore $R(1-e)\subseteq {\rm Ann}(Re)\ll R$. This implies that $R(1-e)\ll R$ and so $Re=R$. It concludes that $e=1$.\\
(iv) The proof follows from the fact that since $R$ is a von Neumann regular ring hence every finitely generated ideal $I$ of $R$ is a direct summand of $R$ such that $I$ is generated by
an idempotent. Suppose that $R=I\oplus J$, then $IJ=0$ and by (i) $I$ can not be a sa-small ideal. \\
(v) Take $N=0$, then $0+M=M$ and so ${\rm Ann}(M)=0$ is small in $R$. Now sssume that $N$ is a nonzero submodule of $M$, then $N=IM$ for some nonzero ideal $I$ of $R$. Let $r\in {\rm Ann}(N)$, then $r(IM)=0$ and so $rI=0$. It concludes that $r=0$ since $R$ is an integral domain and so ${\rm Ann}(N)=0$ which is a small ideal of $R$.
\end{proof}
\begin{prop}\label{pro2.6}
Let $M$ be an $R$-module with $N\leq K\leq M$ and let $T$ be an arbitrary submodule of $M$. The following statments are true.
\begin{itemize}
\item[(i)] Let $M$ be a strong comultiplication module and $N\ll ^{sa}M$. Then for every nonzero submodule $L$ of $M$ with $N+L=M$, 
$L\leq _{e}M$. 
\item[(ii)] If $K\ll ^{sa}_{T}M$, then $N\ll ^{sa}_{T}M$.
\item[(iii)] Assume $\{N_{\lambda}\}_{\lambda \in \Lambda}$ be a family of submodules of $M$. If $N_{t}\ll^{sa}_{T}M$ for some $t\in \Lambda $, then  $\cap _{\lambda \in \Lambda}N_{\lambda} \ll ^{sa}_{T}M$.
\item[(iv)] Let $T\subseteq K$ and  $N\ll ^{sa}_{T}K$, then $N\ll ^{sa}_{T}M$.
\item[(v)] Suppose that $M$ is a multiplication module. If $N\ll ^{sa}M$ (resp. $N\ll _{T}^{sa}M$), then $(N:M)\ll ^{sa}R$ (resp. $(N:M)\ll_{(T:M)}^{sa}R)$. The converse is true if $M$ is also a finitely generated faithful module. Furthermore, in this case, ${\rm J}^{sa}_{T}(M)={\rm J}^{sa}_{(T:M)}(R)M$.
\item[(vi)] Assume that $M$ and $M'$ are $R$-modules and $f:M\rightarrow M'$ is an $R$-epimorphism. If $N'\ll _{T'}^{sa}M'$ for some submodule $T'$ of $M'$, then $f^{-1}(N')\ll ^{sa}_{f^{-1}(T')}M$.
\item[(vii)] Assume that $T\supsetneq N$ is an arbitrary submodule of $M$. If $K/N\ll _{T/N}^{sa}M/N$, then $K\ll _{T}^{sa}M$ and $N\ll _{T}^{sa}M$. 
\item[(viii)] Let $M$ be a Noetherian $R$-module. If $S$ is a m.c.s. of $R$ and 
$S^{-1}N$ is an $S^{-1}T$-sa-small submodule of $S^{-1}R$-module $S^{-1}M$, then 
 $N$ is a $T$-sa-small submodule of $M$. 
\end{itemize}
\end{prop}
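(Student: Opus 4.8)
The plan is to reduce all eight items to a short list of elementary facts: (a) a submodule of a superfluous submodule is superfluous (if $A\ll B$ and $A'\le A$, then $A'\ll B$); (b) superfluity is inherited by larger ambient modules (\cite[Remark 2.8]{AT}); (c) for multiplication modules $N=(N:_RM)M$ and $T=(T:_RM)M$, and for finitely generated faithful $M$ one also has the cancellation property and $(IM:_RM)=I$; and (d) the modular law. Items (ii) and (iii) are then immediate: if $K\ll^{sa}_TM$ and $T\subseteq N+X$ with $N\le K$, then $T\subseteq K+X$, so ${\rm Ann}(X)\ll(T:_RM)$; and $\bigcap_\lambda N_\lambda\le N_t$ reduces (iii) to (ii). For (iv), given $T\subseteq K$ and $Y\le M$ with $T\subseteq N+Y$, I set $X:=Y\cap K$ and use the modular law ($N\le K$) together with $T\subseteq K$ to obtain $T=T\cap K\subseteq(N+Y)\cap K=N+X$ with $X\le K$; then $N\ll^{sa}_TK$ gives ${\rm Ann}(X)\ll(T:_RK)$, and since ${\rm Ann}(Y)\subseteq{\rm Ann}(X)$, transferring superfluity across $(T:_RM)\subseteq(T:_RK)$ yields ${\rm Ann}(Y)\ll(T:_RM)$. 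For (vii), the same idea is carried out modulo $N$: from $T\subseteq K+Y$ one gets $T/N\subseteq(K/N)+\big((Y+N)/N\big)$, and the hypothesis gives ${\rm Ann}\big((Y+N)/N\big)\ll\big((T/N):_R(M/N)\big)=(T:_RM)$ (using $N\subseteq T$), so ${\rm Ann}(Y)\subseteq{\rm Ann}\big((Y+N)/N\big)$ and (a) give $K\ll^{sa}_TM$; then $N\ll^{sa}_TM$ follows from (ii).

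For (i), the point is that a strong comultiplication module lets me pass between submodules and ideals: writing $L={\rm Ann}_M(I)$, DAC gives ${\rm Ann}_R(L)=I$, and $N\ll^{sa}M$ with $N+L=M$ gives $I\ll R$. If $L$ were not essential, choose $0\ne L'={\rm Ann}_M(I')$ with $L\cap L'=0$; then ${\rm Ann}_M(I+I')={\rm Ann}_M(I)\cap{\rm Ann}_M(I')=0$, so DAC yields $I+I'={\rm Ann}_R(0)=R$, and since $I\ll R$ this forces $I'=R$, whence $L'={\rm Ann}_M(R)=0$, a contradiction. For (vi), I use that a surjection satisfies $f\big(f^{-1}(S')\big)=S'$ for every $S'\le M'$ and $\big(f^{-1}(T'):_RM\big)=(T':_RM')$: if $Y\le M$ with $f^{-1}(T')\subseteq f^{-1}(N')+Y$, applying $f$ gives $T'\subseteq N'+f(Y)$, so ${\rm Ann}(f(Y))\ll(T':_RM')=\big(f^{-1}(T'):_RM\big)$, and ${\rm Ann}(Y)\subseteq{\rm Ann}(f(Y))$ with (a) finishes it.

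Item (v) is the substantive one. For the forward direction I multiply by $M$: from $(T:_RM)\subseteq(N:_RM)+J$ I get $T=(T:_RM)M\subseteq N+JM$, so $N\ll^{sa}_TM$ gives ${\rm Ann}_R(JM)\ll(T:_RM)$, and ${\rm Ann}_R(J)\subseteq{\rm Ann}_R(JM)$ with (a) gives ${\rm Ann}_R(J)\ll(T:_RM)=\big((T:_RM):_RR\big)$, i.e. $(N:_RM)\ll^{sa}_{(T:_RM)}R$ (the case $T=M$ is the non-relative statement). For the converse, with $M$ finitely generated faithful multiplication, from $T\subseteq N+X$ I pass to colon ideals, $(T:_RM)\subseteq(N:_RM)+(X:_RM)$, get ${\rm Ann}_R\big((X:_RM)\big)\ll(T:_RM)$, and conclude using ${\rm Ann}_R(X)={\rm Ann}_R\big((X:_RM)\big)$ (faithfulness). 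The identity ${\rm J}^{sa}_T(M)={\rm J}^{sa}_{(T:_RM)}(R)M$ then follows: ``$\subseteq$'' since each $T$-sa-small $N$ has $(N:_RM)\ll^{sa}_{(T:_RM)}R$, so $N=(N:_RM)M\subseteq{\rm J}^{sa}_{(T:_RM)}(R)M$; ``$\supseteq$'' since each $I\ll^{sa}_{(T:_RM)}R$ has $(IM:_RM)=I$, hence $IM\ll^{sa}_TM$ by the converse, and $\big(\sum_\alpha I_\alpha\big)M=\sum_\alpha I_\alpha M$.

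For (viii) the natural route is to localize. Because $M$ is Noetherian, every submodule is finitely generated, so $S^{-1}{\rm Ann}_R(X)={\rm Ann}_{S^{-1}R}(S^{-1}X)$ and $S^{-1}(T:_RM)=(S^{-1}T:_{S^{-1}R}S^{-1}M)$. Given $X\le M$ with $T\subseteq N+X$, localization gives $S^{-1}T\subseteq S^{-1}N+S^{-1}X$, so the hypothesis yields $S^{-1}{\rm Ann}_R(X)\ll S^{-1}(T:_RM)$ as $S^{-1}R$-modules. The main obstacle is the descent: concluding ${\rm Ann}_R(X)\ll(T:_RM)$ over $R$ from superfluity after localization. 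Superfluity of ideals does not, in general, descend along $R\to S^{-1}R$, so this is the step where the Noetherian hypothesis must really be used — presumably by reducing ``small in $(T:_RM)$'' to containment in the radical of $(T:_RM)$ and controlling maximal submodules under localization — and it is the step I would single out as requiring the most care and the one I would most want to see the authors' argument for.
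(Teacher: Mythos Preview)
Your argument tracks the paper's almost step for step on (ii), (iii), (v), (vi), and (vii). Three places deserve comment.

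For (i), the paper does not argue directly from the double annihilator condition as you do; it simply records that $\Ann_R(L)\ll R$ and then invokes \cite[Theorem~2.5]{WL}, which says that in a strong comultiplication module $L=(0:_M\Ann_R(L))$ is essential whenever $\Ann_R(L)\ll R$. Your self-contained argument is a legitimate alternative and in fact unpacks what that citation provides.

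For (iv), the step you phrase as ``transferring superfluity across $(T:_RM)\subseteq(T:_RK)$'' is not a valid general principle: from $A\ll C$ and $A\subseteq B\subseteq C$ one cannot conclude $A\ll B$ without extra hypotheses. The paper's proof has the same lacuna but hides it: it writes $\Ann(L\cap K)\ll(T:_RM)$ directly, whereas the hypothesis $N\ll^{sa}_TK$ on its face only yields $\Ann(L\cap K)\ll(T:_RK)$. So you have correctly located the delicate point; neither you nor the authors close it.

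For (viii), your instinct is right, and you will not find the missing step in the paper. The authors localize exactly as you do, reach $S^{-1}\Ann_R(X)\ll S^{-1}(T:_RM)$, and then simply write ``This implies that $\Ann_R(X)\ll(T:_RM)$'' with no further argument. The descent of superfluity along $R\to S^{-1}R$ that you flag as the crux is asserted, not proved.
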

\begin{proof}
(i) Since $N\ll ^{sa}M$ and $N+L=M$ hence ${\rm Ann}(L)\ll R$. By virtue of \cite[Theorem 2.5]{WL}, $L=(0:_{M}{\rm Ann}_{R}(L))$ is an essential submodule of $M$.\\
(ii) Assume that $T\subseteq N+X$ for some submodule $X$ of $M$. Therefore $T\subseteq K+X$ and since $K\ll ^{sa}_{T}M$ hence ${\rm Ann}(X)\ll (T:M)$ and the proof is complete.  \\
(iii) It is clear by (ii).\\
(iv) Let $L\leq M$ and $T\subseteq N+L$, then by the modular law
$T\subseteq (N+L)\cap K=N+(L\cap K)$.
Since $N\ll ^{sa}_{T}K$, hence ${\rm Ann}(L)\subseteq {\rm Ann}(L\cap K)\ll (T:_{R}M)$ and this implies that ${\rm Ann}(L)\ll (T:_{R}M)$ and the proof is complete.\\
(v) ($\Rightarrow $) Let  $(N:M)+J=R$ for some ideal $J$ of $R$, then $(N:M)M+JM=M$. Since $M$ is multiplication hence 
$N+JM=M$ and so $\Ann (JM)\ll R$. From $\Ann (J)\subseteq \Ann (JM)$ we infer that $\Ann (J)\ll R$ and so $(N:M)\ll ^{sa}R$.\\
($\Leftarrow $) Suppose  that $(N:M)\ll ^{sa}R$ and $N+K=M$ for some submodule $K$ of $M$. Thus 
$(N:M)M+(K:M)M=RM$ and since $M$ is a finitely generated faithful multiplication module hence $M$ is a cancellation module and so $(N:M)+(K:M)=R$. It concludes that 
$\Ann (K:M)\ll R$. By hypothesis, since $M$ is a faithful module hence $\Ann (K:M)=\Ann (K)\ll R$ and the proof is complete. 

Now assume that $N\ll _{T}^{sa}M$ and $(T:M)\subseteq (N:M)+J$ for some ideal $J$ of $R$. We show that 
${\rm Ann}(J)\ll ((T:_{R}M):_{R}R)=(T:_{R}M)$. Since $M$ is a multiplication module hence
$T=(T:M)M\subseteq (N:M)M+JM=N+JM$.
Now the hypothesis $N\ll _{T}^{sa}M$ implies that ${\rm Ann}(J)\subseteq {\rm Ann}(JM)\ll (T:_{R}M)$ and so ${\rm Ann}(J)\ll (T:_{R}M)$ as we needed. Conversely, let $(N:M)\ll_{(T:M)}^{sa}R$ and $T\subseteq N+K$ for some submodules $N, K$ of $M$. Since $M$ is multiplication there exist ideals $I, J$ of $M$ such that $N=IM=(N:M)M$ and $K=JM=(K:M)M$.
Therefore $T=(T:M)M\subseteq (N:M)M+JM=((N:M)+J)M$ and so $(T:M)\subseteq (N:M)+J$, because $M$ is a cancellation module. From the hypothesis $(N:M)\ll_{(T:M)}^{sa}R$, we infer that ${\rm Ann}(J)\ll (T:M)$. Since $M$ is faithful hence ${\rm Ann}(J)={\rm Ann}(JM)\ll (T:M)$ and the proof is complete. For the second part we note that
\begin{align*}
{\rm J}^{sa}_{T}(M)&:=\sum _{N\ll ^{sa}_{T}M}N=\sum _{N\ll ^{sa}_{T}M}(N:M)M\\
&:=\left(\sum _{(N:M)\ll ^{sa}_{(T:M)}R}(N:M)\right) M={\rm J}^{sa}_{(T:M)}(R)M.
\end{align*}
(vi) Suppose that $f^{-1}(T')\subseteq f^{-1}(N')+L$ for some submodule $L$ of $M$. Then since $f$ is an $R$-epimorphism hence
\begin{equation*}
T'= f(f^{-1}(T'))\subseteq f(f^{-1}(N')+L)\subseteq N'+f(L). 
\end{equation*}
Since $N'\ll _{T'}^{sa}M'$ hence ${\rm Ann}(f(L))\ll (T':M')$ and therefore 
\[{\rm Ann}(L)\subseteq {\rm Ann}(f(L))\ll (T':_{R}M')=(f^{-1}(T'):_{R}M).\]
It implies that ${\rm Ann}(L)\ll (f^{-1}(T'):_{R}M)$ and the proof is complete.\\
(vii) Assume that $K/N\ll _{T/N}^{sa}M/N$ and also $T\subseteq K+L$ for some submodule $L$ of $M$. Then $T/N\subseteq (K+L)/N=K/N+L/N$ implies that ${\rm Ann}(L/N)\ll (T/N:M/N)=(T:M)$. Since ${\rm Ann}(L)\subseteq {\rm Ann}(L/N)\ll (T:M)$ hence ${\rm Ann}(L)\ll (T:M)$. It conclude that $K\leq _{T}^{sa}M$ and by (ii), $N\leq _{T}^{sa}M$.\\
(viii) We recall that if $X$ is a finitely generated submodule of $M$, then $S^{-1}(0:_{R}X)=(S^{-1}0:_{S^{-1}R}S^{-1}X)$.
Suppose that  $T\subseteq N+X$  we show that ${\rm Ann}_{R}(X)\ll (T:_{R}M)$. Then we have
\[S^{-1}T\subseteq S^{-1}(N+X)=S^{-1}N+S^{-1}X.\]
By hypothesis
${\rm Ann}_{S^{-1}R}(S^{-1}X)\ll (S^{-1}T:_{S^{-1}R}S^{-1}M)$. Therefore   
\begin{align*}
{\rm Ann}_{S^{-1}R}(S^{-1}X):&=(S^{-1}0:_{S^{-1}R}S^{-1}X)=S^{-1}(0:_{R}X)\\
\ll & S^{-1}(T:_{R}M)=(S^{-1}T:_{S^{-1}R}S^{-1}M).
\end{align*}
This implies that ${\rm Ann}_{R}(X)\ll (T:_{R}M)$ and the proof is complete.
\end{proof}
\begin{cor}
Let $M$ be a faithful finitely generated multiplication $R$-module and let $T$ be an arbitrary submodule of $M$. Then $M$ is a $T$-sa-hollow module if and only if $R$ is a $(T:M)$-sa-hollow ring.
\end{cor}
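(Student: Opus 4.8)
The plan is to reduce the statement to the correspondence already established in Proposition \ref{pro2.6}(v). Recall from that proposition that, because $M$ is a multiplication module, $N\ll^{sa}_{T}M$ implies $(N:M)\ll^{sa}_{(T:M)}R$, and, because $M$ is in addition finitely generated and faithful, the converse implication $(N:M)\ll^{sa}_{(T:M)}R\Rightarrow N\ll^{sa}_{T}M$ holds as well. Thus the maps $N\mapsto(N:M)$ and $I\mapsto IM$ carry $T$-sa-small submodules of $M$ to $(T:M)$-sa-small ideals of $R$ and back, and the corollary is exactly the assertion that under these maps ``all submodules of $M$'' matches ``all ideals of $R$''.

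The one auxiliary fact I would use is that a faithful finitely generated multiplication module is a cancellation module (as was already invoked in the proof of Proposition \ref{pro2.6}(v)); consequently, for every ideal $I$ of $R$ the submodule $IM$ satisfies $IM=(IM:M)M$ by the multiplication property, and cancelling $M$ gives $(IM:M)=I$. Hence $I\mapsto IM$ and $N\mapsto(N:M)$ are mutually inverse, inclusion-preserving bijections between ${\rm L}(R)$ and ${\rm L}(M)$.

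For the implication $(\Rightarrow)$ I would argue: assume $M$ is a $T$-sa-hollow module and let $I$ be an arbitrary ideal of $R$. Then $IM\in{\rm L}(M)$, so $IM\ll^{sa}_{T}M$ by hypothesis, whence $(IM:M)\ll^{sa}_{(T:M)}R$ by Proposition \ref{pro2.6}(v); since $(IM:M)=I$, this reads $I\ll^{sa}_{(T:M)}R$, and as $I$ was arbitrary, $R$ is a $(T:M)$-sa-hollow ring. For $(\Leftarrow)$: assume $R$ is a $(T:M)$-sa-hollow ring and let $N\in{\rm L}(M)$. Since $M$ is a multiplication module, $N=(N:M)M$, and the ideal $(N:M)$ is $(T:M)$-sa-small in $R$ by hypothesis; the converse half of Proposition \ref{pro2.6}(v) (the part in whose proof finite generation and faithfulness enter) then gives $N\ll^{sa}_{T}M$. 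As $N$ was arbitrary, $M$ is a $T$-sa-hollow module.

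I do not anticipate a genuine obstacle: the entire content is packaged in Proposition \ref{pro2.6}(v), and the only point requiring a word of care is the degenerate case $T=0$, where $(T:M)=\Ann(M)=0$ by faithfulness; there neither the nonzero module $M$ nor the nonzero ring $R$ possesses any $0$-sa-small submodule (as recorded earlier for nonzero modules), so both sides of the equivalence are false and it holds trivially, while the displayed argument above in any case goes through formally for every $T$.
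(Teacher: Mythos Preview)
Your proposal is correct and follows essentially the same approach as the paper, which simply states that the proof is straightforward by Proposition~\ref{pro2.6}(v). You have merely spelled out the details the paper leaves implicit, namely the bijection $I\mapsto IM$, $N\mapsto(N:M)$ coming from the cancellation property, and the harmless degenerate case $T=0$.
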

\begin{proof}
The proof is straightforward by Proposition \ref{pro2.6} (v). 
\end{proof}
\begin{cor}
Let $(R, \mathfrak{m})$ be a local ring and let  $A$ be an arbitrary ideal of $R$. If $\mathfrak{m}\in {\rm S}^{sa}_{A}(R)$, then $I\in {\rm S}^{sa}_{A}(R)$ for every ideal $I$ of $R$. 
\end{cor}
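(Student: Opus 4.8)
The plan is to obtain the corollary immediately from the downward heredity of $T$-sa-smallness established in Proposition \ref{pro2.6}(ii), invoking the one distinctive feature of a local ring.

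First I would unwind the notions for $R$ regarded as a module over itself: since $(A:_{R}R)=A$, an ideal $I$ of $R$ is $A$-sa-small exactly when, for every nonzero ideal $X$ of $R$ with $A\subseteq I+X$, one has ${\rm Ann}(X)\ll A$. Now let $I$ be any proper ideal of $R$. Because $(R,\mathfrak{m})$ is local we have $I\subseteq\mathfrak{m}$, so the chain $I\leq\mathfrak{m}\leq R$ places us precisely in the hypothesis of Proposition \ref{pro2.6}(ii) with $M=R$, $T=A$, $K=\mathfrak{m}$ and $N=I$. Since $\mathfrak{m}\in{\rm S}^{sa}_{A}(R)$ says $\mathfrak{m}\ll^{sa}_{A}R$, that proposition yields $I\ll^{sa}_{A}R$, i.e. $I\in{\rm S}^{sa}_{A}(R)$, as desired. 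Spelled out without the citation, the argument is one line: if $A\subseteq I+X$ for a nonzero ideal $X$, then $A\subseteq\mathfrak{m}+X$, and $\mathfrak{m}\ll^{sa}_{A}R$ then forces ${\rm Ann}(X)\ll A$. The only ideal not reached by ``$I\subseteq\mathfrak{m}$'' is $I=R$, which is to be read as excluded here, consistently with the earlier fact that $M\notin{\rm S}^{sa}(M)$.

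I do not expect a genuine obstacle: the whole content is the interaction of locality --- every proper ideal of $R$ lies in $\mathfrak{m}$ --- with the monotonicity of Proposition \ref{pro2.6}(ii). The only points meriting a second look are the elementary identity $(A:_{R}R)=A$ and the degenerate case $I=R$. It is also worth remarking that, by the earlier Note in which $T\neq 0$ together with $N\ll^{sa}_{T}M$ is shown to force $T\nsubseteq N$, the hypothesis $\mathfrak{m}\in{\rm S}^{sa}_{A}(R)$ is itself quite restrictive on $A$, so that the corollary is best viewed as the statement that sa-smallness of the maximal ideal of a local ring propagates to all of its proper ideals.
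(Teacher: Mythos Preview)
Your argument is correct and is exactly the paper's approach: the paper's proof consists of the single sentence ``The proof is straightforward by Proposition \ref{pro2.6} (ii),'' and you have simply unpacked that citation, using locality to get $I\subseteq\mathfrak{m}$ and then invoking the downward heredity. Your remark that the case $I=R$ must be read as excluded is apt; the paper's statement is slightly imprecise on this point.
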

\begin{proof}
The proof is straightforward by Proposition \ref{pro2.6} (ii). 
\end{proof}
\begin{thm}
Let $f:M\rightarrow M'$ be an $R$-epimorphism and let $T'$ be a submodule of $M'$. If $M'$ is a $T'$-sa-hollow module, then 
$M$ is an $f^{-1}(T')$-sa-hollow module.
\end{thm}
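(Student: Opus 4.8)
The plan is to reduce everything to parts (vi) and (ii) of Proposition \ref{pro2.6}. Recall that $M$ being an $f^{-1}(T')$-sa-hollow module means precisely that every submodule $N$ of $M$ is an $f^{-1}(T')$-sa-small submodule of $M$, so it suffices to fix an arbitrary $N\leq M$ and show $N\ll^{sa}_{f^{-1}(T')}M$.

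First I would pass to $M'$ via $f$. Since $f$ is an $R$-epimorphism, $f(N)$ is a submodule of $M'$, and because $M'$ is assumed to be a $T'$-sa-hollow module we get $f(N)\ll^{sa}_{T'}M'$. Applying Proposition \ref{pro2.6}(vi) to the submodule $f(N)$ of $M'$ yields $f^{-1}(f(N))\ll^{sa}_{f^{-1}(T')}M$.

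Next I would pull this back down to $N$ itself. We always have $N\subseteq f^{-1}(f(N))$, so $N$ is a submodule of the $f^{-1}(T')$-sa-small submodule $f^{-1}(f(N))$ of $M$; hence by Proposition \ref{pro2.6}(ii) it follows that $N\ll^{sa}_{f^{-1}(T')}M$. Since $N$ was an arbitrary submodule of $M$, this shows that $M$ is an $f^{-1}(T')$-sa-hollow module, completing the proof.

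I do not expect a genuine obstacle here: the argument is just the composition of the "$f^{-1}$ of a $T'$-sa-small submodule is $f^{-1}(T')$-sa-small" stability (part (vi)) with the downward hereditariness of $T$-sa-small submodules (part (ii)). The only minor point to be careful about is the set-theoretic inclusion $N\subseteq f^{-1}(f(N))$, which holds for any map and in particular lets part (ii) apply.
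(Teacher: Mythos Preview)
Your proposal is correct and follows essentially the same route as the paper's own proof: push $N$ forward to $f(N)$, use that $M'$ is $T'$-sa-hollow, apply Proposition~\ref{pro2.6}(vi) to get $f^{-1}(f(N))\ll^{sa}_{f^{-1}(T')}M$, and then descend to $N$ via the inclusion $N\subseteq f^{-1}(f(N))$ and Proposition~\ref{pro2.6}(ii).
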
 
\begin{proof}
Assume that $K$ is a submodule of $M$. Then $f(K)\leq _{T'}^{sa}M'$ since $M'$ is a $T'$-sa-hollow module. By Proposition \ref{pro2.6} (vi), $f^{-1}(f(K))$ is an $f^{-1}(T')$-sa-small submodule of $M$. Since $K\subseteq f^{-1}(f(K))$, so by Proposition \ref{pro2.6} (ii), $K$ is also an $f^{-1}(T')$-sa-small submodule of $M$.
\end{proof}

The following example shows that the converse of Proposition \ref{pro2.6} (vii), is not true.
\begin{exam}
Consider the $\mathbb{Z}$-module $M=\mathbb{Z}$. Take $T=2\mathbb{Z}$, $K=4\mathbb{Z}$ and $N=8\mathbb{Z}$. Then $8\mathbb{Z}$ is an $2\mathbb{Z}$-sa-small submodule of $\mathbb{Z}$, because if $2\mathbb{Z}\subseteq 8\mathbb{Z}+k\mathbb{Z}$ for some submodule $k\mathbb{Z}$ of $\mathbb{Z}$, then either $(k, 8)=2$ or $(k, 8)=1$. In any case, $0={\rm Ann}(k\mathbb{Z})\ll (2\mathbb{Z}:_{\mathbb{Z}}\mathbb{Z})=2\mathbb{Z}$, but $K/N=4\mathbb{Z}/8\mathbb{Z}$ is not $2\mathbb{Z}/8\mathbb{Z}$-sa-small submodule of $\mathbb{Z}/8\mathbb{Z}$, because if $2\mathbb{Z}/8\mathbb{Z}\subseteq 4\mathbb{Z}/8\mathbb{Z}+k\mathbb{Z}/8\mathbb{Z}$ for some submodule $k\mathbb{Z}/8\mathbb{Z}$ of $\mathbb{Z}/8\mathbb{Z}$, then $2\mathbb{Z}\subseteq 4\mathbb{Z}+k\mathbb{Z}=(4, k)\mathbb{Z}$. If $k=2$, then ${\rm Ann}(2\mathbb{Z}/8\mathbb{Z})=4\mathbb{Z}$ is not small in $(2\mathbb{Z}/8\mathbb{Z}:_{\mathbb{Z}}\mathbb{Z}/8\mathbb{Z})=2\mathbb{Z}$. If $k=1$, then ${\rm Ann}(\mathbb{Z}/8\mathbb{Z})=8\mathbb{Z}$ is not small in $(2\mathbb{Z}/8\mathbb{Z}:_{\mathbb{Z}}\mathbb{Z}/8\mathbb{Z})=2\mathbb{Z}$.
\end{exam}
We recall that for an ideal $I$ of a ring $R$ the {\it radical} of $I$ is defined by $\rad (I)=\{x\in R\,\mid \, x^{n}\in I\,\, {\rm for}\,\, {\rm some}\, n\in \N\}$.  Let $N$ be a proper submodule of $M$. Then, the {\it prime radical} of
$N$, denoted by $\rad (N)$ is defined to be the intersection of all prime submodules of $M$ containing $N$, and in case $N$ is not contained in any prime submodule then $\rad (N)$ is defined to be $M$.
\begin{lem}\label{L3.16}
If $I\ll ^{sa}R$, then $\rad (I)\ll ^{sa}R$.
\end{lem}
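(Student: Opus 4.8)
The plan is to reduce the $\rad(I)$-statement to the $I$-statement by showing that radical-comaximality already forces comaximality: if $J$ is an ideal of $R$ with $\rad(I)+J=R$, then in fact $I+J=R$. Granting this, the hypothesis $I\ll^{sa}R$ immediately yields ${\rm Ann}(J)\ll R$ for every such $J$, which is exactly the condition defining $\rad(I)\ll^{sa}R$.

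To prove the comaximality claim, fix an ideal $J$ with $\rad(I)+J=R$ and write $1=a+b$ with $a\in\rad(I)$ and $b\in J$. Pick $n\in\N$, $n\geq 1$, with $a^{n}\in I$. Expanding by the binomial theorem,
\[
1=(a+b)^{2n}=\sum_{k=0}^{2n}\binom{2n}{k}a^{k}b^{2n-k}.
\]
In each term either $k\ge n$, in which case $a^{k}=a^{n}a^{k-n}\in I$, or $k\le n-1$, in which case $2n-k\ge n+1\ge 1$ and hence $b^{2n-k}\in J$ because $b\in J$. Collecting the two groups of terms expresses $1$ as a sum of an element of $I$ and an element of $J$, so $I+J=R$.

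With this in hand, let $J$ be any ideal with $\rad(I)+J=R$. By the previous step $I+J=R$, and since $I\ll^{sa}R$ we obtain ${\rm Ann}(J)\ll R$. As $J$ was an arbitrary ideal comaximal with $\rad(I)$, this shows $\rad(I)\ll^{sa}R$.

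There is essentially no obstacle here: beyond unwinding the definitions of $\ll^{sa}$ and $\rad$, the only ingredient is the standard fact that $\rad(I)+J=R$ implies $I+J=R$, which the binomial computation above makes completely explicit. (One could alternatively pass to $R/J$ and observe that the image of $a$ is simultaneously a unit and nilpotent, forcing $1\in I+J$, but recording the direct computation seems cleaner.)
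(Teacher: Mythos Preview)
Your proof is correct and follows essentially the same strategy as the paper: both reduce to the elementary fact that $\rad(I)+J=R$ forces $I+J=R$, and then invoke the hypothesis $I\ll^{sa}R$. The only cosmetic difference is in justifying that auxiliary fact---the paper observes $R=\rad(I)+J\subseteq\rad(I)+\rad(J)\subseteq\rad(I+J)$, whence $I+J=R$, whereas you carry out an explicit binomial expansion (in fact exponent $n$ rather than $2n$ would already suffice, since for $k<n$ one has $n-k\ge 1$ and $b^{n-k}\in J$).
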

\begin{proof}
Let $\rad (I)+J=R$ for some ideal $J$ of $R$. Since $\rad (I)+J\subseteq \rad (I)+\rad (J)$, so $\rad (I)+\rad (J)=R$. This implies that $\rad (I+J)=R$ and so $I+J=R$. Hence $\Ann (J)\ll R$ since $I\ll ^{sa}R$ and so $\rad (I)\ll ^{sa}R$.
\end{proof}
\begin{prop}
Let $M$ be a finitely generated faithful multiplication $R$-module. If $N\ll ^{sa}M$, then $\rad (N)\ll ^{sa}M$.
\end{prop}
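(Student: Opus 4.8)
The plan is to push the statement down to the ring $R$ via the multiplication hypothesis, apply Lemma~\ref{L3.16} there, and pull the conclusion back up to $M$.

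First, since $M$ is a finitely generated faithful multiplication module and $N\ll^{sa}M$, the implication $(\Rightarrow)$ of Proposition~\ref{pro2.6}(v) gives $(N:M)\ll^{sa}R$. By Lemma~\ref{L3.16} applied to the ideal $I=(N:M)$, we obtain $\rad((N:M))\ll^{sa}R$, that is, $\sqrt{(N:M)}\ll^{sa}R$.

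Second, I would identify $(\rad(N):M)$ with $\sqrt{(N:M)}$. For a multiplication module one has the description $\rad(N)=\sqrt{(N:M)+\Ann(M)}\,M$ of the prime radical of a submodule; since $M$ is faithful, $\Ann(M)=0$, so $\rad(N)=\sqrt{(N:M)}\,M$. Because $M$ is finitely generated, faithful and multiplication it is a cancellation module (as already used in the proof of Proposition~\ref{pro2.6}(v)), hence from $\rad(N)=\sqrt{(N:M)}\,M=(\rad(N):M)M$ we deduce $(\rad(N):M)=\sqrt{(N:M)}$. I should also record that $\rad(N)$ is a proper submodule, so that the statement is non-vacuous: since $(N:M)\ll^{sa}R$ it is a proper ideal, so it lies in some maximal ideal $\fm$ of $R$; then $\fm M$ is a (maximal, hence prime) submodule with $N\subseteq\fm M$ and $\fm M\subsetneq M$ by Nakayama, whence $\rad(N)\subseteq\fm M\subsetneq M$.

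Combining the two steps, $(\rad(N):M)=\sqrt{(N:M)}\ll^{sa}R$, and the converse direction $(\Leftarrow)$ of Proposition~\ref{pro2.6}(v) (valid since $M$ is finitely generated and faithful) yields $\rad(N)\ll^{sa}M$. The main obstacle is the identity $(\rad(N):M)=\sqrt{(N:M)}$: one must recall the description of the prime radical of a submodule of a multiplication module in terms of the radical of its ideal quotient and then invoke cancellation to convert the equality of submodules $\rad(N)=\sqrt{(N:M)}\,M$ into the required equality of ideals. Once that is in hand, the rest is a routine application of Proposition~\ref{pro2.6}(v) and Lemma~\ref{L3.16}.
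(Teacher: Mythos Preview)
Your proof is correct and follows essentially the same route as the paper: pass to $R$ via Proposition~\ref{pro2.6}(v), apply Lemma~\ref{L3.16} to obtain $\rad(N:M)\ll^{sa}R$, identify $(\rad(N):M)=\rad(N:M)$ (the paper cites \cite[Theorem~4]{MM} for $\rad(N)=\rad(N:M)\,M$, while you invoke the equivalent El-Bast--Smith description together with faithfulness and cancellation), and then apply Proposition~\ref{pro2.6}(v) in the converse direction. Your additional check that $\rad(N)$ is proper is a nice touch the paper omits.
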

\begin{proof}
By virtue of \cite[Theorem 4]{MM}, $\rad (N)=\rad (N:M)\,M$ and so
$(\rad (N):M)=\rad (N:M)$. Since $N\ll ^{sa}M$, then by Proposition \ref{pro2.6} (v), $(N:M)\ll ^{sa}R$ and by Lemma \ref{L3.16}, $\rad (N:M)\ll ^{sa}R$. It concludes that $(\rad (N):M)\ll ^{sa}R$. Again using Proposition \ref{pro2.6} (v)
we find that $\rad (N)\ll ^{sa}M$.
\end{proof}
\begin{thm}
Let $N$ be a nonzero sa-small submodule of $M$ and $K\leq M$ with $(N:K)+(K:N)=R$, then $N\cap K\neq 0$. 
\end{thm}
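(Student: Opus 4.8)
The plan is to argue by contradiction: assume $N\cap K=0$ and deduce $N=0$, which contradicts the hypothesis that $N$ is a nonzero submodule.

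First I would exploit the comaximality of the colon ideals. Writing $1=a+b$ with $a\in(N:_R K)$ and $b\in(K:_R N)$, the assumption $N\cap K=0$ forces $a$ and $b$ to act like orthogonal idempotents on $N$ and $K$: indeed $aK\subseteq N$ and $aK\subseteq K$, so $aK\subseteq N\cap K=0$, i.e. $a\in\Ann(K)$; symmetrically $bN\subseteq N\cap K=0$, so $b\in\Ann(N)$. Hence $\Ann(N)+\Ann(K)=R$, the equality being witnessed by $b+a=1$, and moreover $aN=(1-b)N=N$ and $bK=(1-a)K=K$.

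Next I would bring in $N\ll ^{sa}M$. This is the step I expect to be the main obstacle, because the sa-small condition only reacts to a genuine equation $N+L=M$. Mirroring the proof of Theorem~\ref{t2.3}(i), one wants to feed a supplement of $N$ into the definition: produce $L\leq M$ with $N+L=M$ and $\Ann(K)\subseteq\Ann(L)$, the natural candidate being $L=K$ once one has verified $N+K=M$. Establishing $N+K=M$ is where the colon hypothesis must be used a second time; note that if one only assumes $(N:_R K)+(K:_R N)=R$ one may need the slightly stronger comaximality $(N:_R M)+(K:_R M)=R$, which gives $M=(N:_RM)M+(K:_RM)M\subseteq N+K$ since $(N:_RM)M\subseteq N$ and $(K:_RM)M\subseteq K$. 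Granting such an $L$, sa-smallness of $N$ yields $\Ann(L)\ll R$, hence $\Ann(K)\ll R$.

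Finally, $\Ann(K)\ll R$ together with $\Ann(N)+\Ann(K)=R$ forces $\Ann(N)=R$ by the definition of a small ideal, so $N=RN=0$ --- the desired contradiction; therefore $N\cap K\neq0$. To summarize, extracting $a,b$ and running the final small-ideal argument are routine; the one real difficulty is manufacturing, out of the comaximality of the colon ideals, an honest supplement of $N$ in $M$ on which the sa-small hypothesis can act.
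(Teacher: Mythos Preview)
Your overall architecture coincides with the paper's: argue by contradiction, observe that $N\cap K=0$ turns the colon ideals into annihilators so that $\Ann(K)+\Ann(N)=R$, then use sa-smallness to force $\Ann(K)\ll R$, whence $\Ann(N)=R$ and $N=0$.

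The difference is that you pause at the step ``$N\ll^{sa}M\Rightarrow \Ann(K)\ll R$'' and correctly flag it as requiring an equation $N+L=M$ with $\Ann(K)\subseteq\Ann(L)$. The paper does \emph{not} manufacture any such supplement; it simply writes ``Since $N\in {\rm S}^{sa}(M)$ hence ${\rm Ann}(K)\ll R$'' and moves on. In other words, the obstacle you isolated is not resolved by the paper's own argument---it is asserted without establishing $N+K=M$ (or any other $L$). Your instinct that the bare hypothesis $(N:_RK)+(K:_RN)=R$ may be too weak is sound: for instance with $M=\Bbb Z$, $N=2\Bbb Z$ (sa-small by the paper's own Example), and $K=0$, one has $(N:K)+(K:N)=R$ while $N\cap K=0$, so some nondegeneracy on $K$ or a stronger comaximality such as $(N:_RM)+(K:_RM)=R$ is indeed needed for the step to go through.

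So: your proposal follows the paper's line exactly, and the ``real difficulty'' you highlight is a genuine gap that the paper's proof glosses over rather than a subtlety you are missing.
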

\begin{proof}
Suppose that $N\cap K=0$, then $(N:K)+(K:N)={\rm Ann}(K)+{\rm Ann}(N)=R$. Since $N\in {\rm S}^{sa}(M)$ hence 
${\rm Ann}(K)\ll R$ and so ${\rm Ann}(N)=R$ which is contradiction. 
\end{proof}
\begin{thm}
Let $K, H$ be submodules of $M$.
\begin{itemize}
\item[(i)] If $K+H\ll ^{sa}_{T}M$, then $K\ll ^{sa}_{T}M$ and $H\ll ^{sa}_{T}M$.
\item[(ii)] If $M$ is a prime module and $K\ll ^{sa}_{T}M$ and $K+H\neq M$, then $K+H\ll ^{sa}_{T}M$.
\end{itemize}
\end{thm}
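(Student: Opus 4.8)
The plan is to read off part (i) from the monotonicity already recorded in Proposition \ref{pro2.6} (ii), and to settle part (ii) by a short direct computation that exploits the prime hypothesis to replace the annihilator of a sum by the annihilator of one of its nonzero pieces.

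For (i) I would argue as follows. Since $K\le K+H$ and $H\le K+H$ inside $M$, Proposition \ref{pro2.6} (ii) applied with the larger submodule $K+H$ immediately yields $K\ll^{sa}_{T}M$ and $H\ll^{sa}_{T}M$. If one prefers to argue in place: given a nonzero submodule $X$ with $T\subseteq K+X$, then a fortiori $T\subseteq (K+H)+X$, so ${\rm Ann}(X)\ll(T:M)$ by the hypothesis $K+H\ll^{sa}_{T}M$; the identical argument with $K$ replaced by $H$ finishes the other containment.

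For (ii) I would let $X$ be a nonzero submodule of $M$ with $T\subseteq (K+H)+X=K+(H+X)$. Because $X\neq 0$, the submodule $H+X$ is nonzero, so the hypothesis $K\ll^{sa}_{T}M$ gives ${\rm Ann}(H+X)\ll(T:M)$. Now I would invoke primeness of $M$: both $X$ and $H+X$ are nonzero submodules of $M$, hence ${\rm Ann}(X)={\rm Ann}(M)={\rm Ann}(H+X)$, and therefore ${\rm Ann}(X)\ll(T:M)$. This is exactly the condition for $K+H\ll^{sa}_{T}M$. The hypothesis $K+H\neq M$ only serves to keep $K+H$ a proper submodule — ruling out the degenerate case in which $T$ already sits inside $K+H$ with a trivial complement — and does not otherwise enter the computation.

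I do not expect a genuine obstacle here; both parts are quick. The one point to watch is the \emph{nonzero submodule} clause in the definition of $T$-sa-small submodule: in (ii) one must check that the submodule fed back into the hypothesis $K\ll^{sa}_{T}M$, namely $H+X$, is actually nonzero, which is immediate from $X\neq 0$. The only substantive (and small) step is the collapse ${\rm Ann}(H+X)={\rm Ann}(X)$, which is precisely what the prime condition buys us.
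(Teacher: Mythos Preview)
Your argument is correct and matches the paper's: part (i) is read off from Proposition \ref{pro2.6}(ii), and part (ii) applies $K\ll^{sa}_{T}M$ to the inclusion $T\subseteq K+(H+X)$ and then invokes primeness to collapse ${\rm Ann}(H+X)$ to ${\rm Ann}(X)$. (As you essentially observed, neither your proof nor the paper's actually uses the hypothesis $K+H\neq M$, since the definition already restricts to nonzero $X$.)
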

\begin{proof}
(i) It is clear by Proposition \ref{pro2.6} (ii).\\
(ii) Let $T\subseteq K+H+X$ for some submodule $X$ of $M$. Since $K\ll ^{sa}_{T}M$ hence ${\rm Ann}(X)={\rm Ann}(H+X)\ll (T:M)$.
\end{proof}
\begin{thm}
Let $R$ be a semisimple hollow ring and let $M_{1}, M_{2}$ be $R$-modules. Suppose that $N_{1}\ll ^{sa}_{T_{1}}M_{1}$ and $N_{2}\ll ^{sa}_{T_{2}}M_{2}$ for submodules $T_{1}\leq M_{1}$ and $T_{2}\leq M_{2}$, then
$N_{1}\oplus N_{2}\ll ^{sa}_{T_{1}\oplus T_{2}}M_{1}\oplus M_{2}$.
\end{thm}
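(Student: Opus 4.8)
The plan is to transfer the statement about the direct sum $M_{1}\oplus M_{2}$ back to the two given hypotheses via the canonical projections, and then to use the semisimplicity of $R$ to collapse every small ideal to $0$. Write $M=M_{1}\oplus M_{2}$, $N=N_{1}\oplus N_{2}$, $T=T_{1}\oplus T_{2}$, and let $\pi _{1},\pi _{2}$ be the projections of $M$ onto $M_{1},M_{2}$. First I would record two elementary facts: $(T:_{R}M)=(T_{1}:_{R}M_{1})\cap(T_{2}:_{R}M_{2})$, since $rM\subseteq T$ holds precisely when $rM_{1}\subseteq T_{1}$ and $rM_{2}\subseteq T_{2}$; and, since $R$ is a semisimple hollow ring, $R$ is simple by Theorem \ref{t2.3}(ii), so ${\rm J}(R)=0$ and the only small ideal of $R$ is the zero ideal.

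Next I would take an arbitrary nonzero submodule $X\leq M$ with $T\subseteq N+X$ and put $X_{i}=\pi _{i}(X)\leq M_{i}$; the goal is to show ${\rm Ann}(X)=0$, which is certainly small in $(T:_{R}M)$. Applying $\pi _{i}$ to the inclusion $T\subseteq N+X$ gives $T_{i}=\pi _{i}(T)\subseteq \pi _{i}(N)+\pi _{i}(X)=N_{i}+X_{i}$. Because $X\subseteq X_{1}\oplus X_{2}$ and $X\neq 0$, at least one $X_{i}$ is nonzero; by symmetry assume $X_{1}\neq 0$. Then $N_{1}\ll ^{sa}_{T_{1}}M_{1}$ together with $T_{1}\subseteq N_{1}+X_{1}$ forces ${\rm Ann}(X_{1})\ll (T_{1}:_{R}M_{1})\leq R$, hence ${\rm Ann}(X_{1})\ll R$ by \cite[Remark 2.8, (2)]{AT}, hence ${\rm Ann}(X_{1})\subseteq {\rm J}(R)=0$. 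Finally a coordinatewise check shows ${\rm Ann}(X)\subseteq {\rm Ann}(\pi _{1}(X))={\rm Ann}(X_{1})$: if $r\in {\rm Ann}(X)$ then for every $x\in X$ one has $r\,\pi _{1}(x)=\pi _{1}(rx)=\pi _{1}(0)=0$. Therefore ${\rm Ann}(X)=0$, and so $N_{1}\oplus N_{2}\ll ^{sa}_{T_{1}\oplus T_{2}}M_{1}\oplus M_{2}$.

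The only steps that need a little care are the passage $T\subseteq N+X\Rightarrow T_{i}\subseteq N_{i}+X_{i}$ under the projections, together with the observation that at least one $X_{i}$ is nonzero (so that the hypothesis on $M_{i}$ may be invoked legitimately), and the inclusion ${\rm Ann}(X)\subseteq {\rm Ann}(\pi _{i}(X))$, which is exactly the place where the direct-sum structure is used. I do not anticipate a genuine obstacle here: the hollowness of $R$ enters only through Theorem \ref{t2.3}(ii), and in fact semisimplicity of $R$ already makes ${\rm J}(R)=0$, so over such an $R$ the annihilator of every nonzero submodule of every module is zero, which reduces the whole statement to the remark just made.
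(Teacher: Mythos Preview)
Your projection argument is correct and is actually tighter than the paper's own proof. The paper begins by assuming the test submodule has the form $L_{1}\oplus L_{2}$, which is not a priori justified since an arbitrary submodule of $M_{1}\oplus M_{2}$ need not split; your passage to $X_{i}=\pi _{i}(X)$ handles the general case directly. The paper then invokes a lemma of Leonard on small submodules of direct summands, using the semisimplicity of $R$ to ensure the intersection $(T_{1}:M_{1})\cap(T_{2}:M_{2})$ is a direct summand, in order to combine ${\rm Ann}(L_{1})\ll (T_{1}:M_{1})$ and ${\rm Ann}(L_{2})\ll (T_{2}:M_{2})$ into ${\rm Ann}(L_{1})\cap{\rm Ann}(L_{2})\ll (T_{1}:M_{1})\cap(T_{2}:M_{2})$; you instead go straight to ${\rm J}(R)=0$ and kill the annihilator outright, which is shorter. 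One small quibble: your appeal to Theorem~\ref{t2.3}(ii) is not quite on target, since that item concerns \emph{sa-small hollow} semisimple rings rather than hollow ones; but as you yourself point out in your closing paragraph, only ${\rm J}(R)=0$ is needed and that already follows from semisimplicity alone, so the slip is harmless and indeed shows the hollowness hypothesis is superfluous.
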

\begin{proof}
Suppose that 
\[T_{1}\oplus T_{2}\subseteq (N_{1}\oplus N_{2})+(L_{1}\oplus L_{2})=(N_{1}+L_{1})\oplus (N_{2}+L_{2}).\]
Then $T_{1}\subseteq N_{1}+L_{1}$ and $T_{2}\subseteq N_{2}+L_{2}$ hence ${\rm Ann}(L_{1})\ll (T_{1}:M_{1})$ and 
${\rm Ann}(L_{2})\ll (T_{2}:M_{2})$. By  \cite[Lemma 2]{Leo}, if $S\subseteq E\subseteq F$ and $S\ll F$ such that $E$ is a direct summand of $F$, then $S\ll E$. Take $S={\rm Ann}(L_{1})\cap {\rm Ann}(L_{2})$ and  $F= (T_{1}:M_{1})\cap (T_{2}:M_{2})$, then 
\begin{align*}
{\rm Ann}(L_{1}\oplus L_{2})&={\rm Ann}(L_{1})\cap {\rm Ann}(L_{2})\ll (T_{1}:M_{1})\cap (T_{2}:M_{2})\\
&=(T_{1}\oplus T_{2}:M_{1}\oplus M_{2}).
\end{align*} 
\end{proof}
\begin{thm}
Let $f:M\rightarrow N$ be a monomorphism and let $T$ be an arbitrary submodule of $M$. If $K\ll  ^{sa}_{T}M$, then $f(K)\ll _{f(T)}^{sa}f(M)$.
\end{thm}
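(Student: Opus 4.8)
The plan is to push everything through the module isomorphism $M \cong f(M)$ induced by the monomorphism $f$, so that the defining condition for $f(K)\ll^{sa}_{f(T)}f(M)$ becomes a literal restatement of $K\ll^{sa}_{T}M$. Concretely, I would start with an arbitrary \emph{nonzero} submodule $Y\leq f(M)$ with $f(T)\subseteq f(K)+Y$, and set $X:=f^{-1}(Y)=\{m\in M:\ f(m)\in Y\}\leq M$. Since $f$ maps $M$ onto $f(M)$ we have $f(X)=Y$, and since $f$ is injective and $Y\neq 0$, the submodule $X$ is nonzero as well; this is precisely the point where one must be a little careful, because the defining condition of a $T$-sa-small submodule only concerns nonzero test submodules.

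Next I would transfer the covering relation. As $f$ is an $R$-homomorphism, $f(K)+Y=f(K)+f(X)=f(K+X)$, so $f(T)\subseteq f(K+X)$; applying $f^{-1}$ and using injectivity of $f$ gives $T\subseteq K+X$. The hypothesis $K\ll^{sa}_{T}M$ then yields ${\rm Ann}(X)\ll (T:_{R}M)$.

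It remains to identify the data on the $f(M)$ side with the data on the $M$ side, which is done by two equalities, both immediate from $f$ being injective and $R$-linear. First, ${\rm Ann}(Y)={\rm Ann}(f(X))={\rm Ann}(X)$, since $r\,f(X)=f(rX)$ vanishes if and only if $rX$ does. Second, $(f(T):_{R}f(M))=(T:_{R}M)$, since $r\,f(M)=f(rM)\subseteq f(T)$ if and only if $rM\subseteq T$. Combining these with the previous step gives
\[
{\rm Ann}(Y)={\rm Ann}(X)\ll (T:_{R}M)=(f(T):_{R}f(M)),
\]
which is exactly the assertion that $f(K)\ll^{sa}_{f(T)}f(M)$. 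I do not expect a genuine obstacle here; the whole argument is bookkeeping, the only substantive points being the nonvanishing of $X$ and the systematic use of injectivity of $f$ to pass inclusions and annihilators back and forth between $M$ and $f(M)$.
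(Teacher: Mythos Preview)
Your proof is correct and follows essentially the same route as the paper: pull the test submodule back through $f^{-1}$, use injectivity to transfer the inclusion $f(T)\subseteq f(K)+Y$ to $T\subseteq K+f^{-1}(Y)$, apply the hypothesis, and then identify annihilators and residuals on both sides. The only difference is that you state equalities ${\rm Ann}(Y)={\rm Ann}(X)$ and $(f(T):f(M))=(T:M)$ where the paper is content with the one-sided inclusion ${\rm Ann}(L)\subseteq{\rm Ann}(f^{-1}(L))$, and you are explicit about the nonzero hypothesis on the test submodule, which the paper leaves implicit.
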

\begin{proof}
Assume that $f(T)\subseteq f(K)+L$ for some submodule $L$ of $f(M)$. We show that ${\rm Ann}(L)\ll (f(T):_{R}f(M))$. We have
\[T=f^{-1}(f(T))\subseteq f^{-1}(f(K)+L)=K+f^{-1}(L)\leq M.\]
Then ${\rm Ann}(L)\subseteq {\rm Ann}(f^{-1}(L))\ll (T:_{R}M)=(f(T):_{R}f(M))$.
\end{proof}
\begin{thm}
Let $f:N\rightarrow K$ be a monomorphism and let $T$ be an arbitrary submodule of $N$. If $g:K\rightarrow M$ is an $f(T)$-sa-small epimorphism, then $g\circ f:N\rightarrow M$ is also a $T$-sa-small epimorphism.
\end{thm}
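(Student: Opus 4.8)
The plan is to verify the definition of a $T$-sa-small submodule directly for ${\rm Ker}(g\circ f)$. First I would record the kernel identity ${\rm Ker}(g\circ f)=f^{-1}({\rm Ker}(g))$, valid for any composable homomorphisms since $g(f(n))=0$ exactly when $f(n)\in{\rm Ker}(g)$. By hypothesis $g$ is an $f(T)$-sa-small epimorphism, meaning ${\rm Ker}(g)\ll_{f(T)}^{sa}K$, so the substantive point is to show $f^{-1}({\rm Ker}(g))\ll_{T}^{sa}N$, which is precisely the statement that $g\circ f$ is a $T$-sa-small epimorphism.

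Second, I would test the definition. Let $X$ be a nonzero submodule of $N$ with $T\subseteq f^{-1}({\rm Ker}(g))+X$. Applying $f$ and using $f(f^{-1}({\rm Ker}(g)))\subseteq{\rm Ker}(g)$ gives $f(T)\subseteq{\rm Ker}(g)+f(X)$ inside $K$. Since $f$ is a monomorphism, $f(X)$ is a nonzero submodule of $K$, so the hypothesis ${\rm Ker}(g)\ll_{f(T)}^{sa}K$ applies and yields ${\rm Ann}(f(X))\ll(f(T):_{R}K)$. Moreover $f$ restricts to an isomorphism $X\cong f(X)$, so ${\rm Ann}(X)={\rm Ann}(f(X))$ and hence ${\rm Ann}(X)\ll(f(T):_{R}K)$.

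Finally, I would pass from $(f(T):_{R}K)$ to $(T:_{R}N)$. If $r\in(f(T):_{R}K)$, then $f(rN)=rf(N)\subseteq rK\subseteq f(T)$, so $rN\subseteq T$ because $f$ is injective; thus $(f(T):_{R}K)\subseteq(T:_{R}N)$ as ideals of $R$. Combining this inclusion with the previous step and \cite[Remark 2.8, (2)]{AT} (smallness is preserved when the ambient module is enlarged), we obtain ${\rm Ann}(X)\ll(T:_{R}N)$. Therefore ${\rm Ker}(g\circ f)=f^{-1}({\rm Ker}(g))\ll_{T}^{sa}N$, which finishes the proof.

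The kernel identity and the image computation are routine. The step I expect to be the crux, and the one place the monomorphism hypothesis is essential, is the colon-ideal inclusion $(f(T):_{R}K)\subseteq(T:_{R}N)$ together with the transfer-of-smallness remark: without injectivity of $f$ neither this inclusion nor the nonvanishing of $f(X)$ — needed in order to invoke the hypothesis on ${\rm Ker}(g)$ — would be guaranteed.
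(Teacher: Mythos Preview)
Your proof is correct and follows essentially the same route as the paper: identify ${\rm Ker}(g\circ f)=f^{-1}({\rm Ker}\,g)$, push the inclusion $T\subseteq{\rm Ker}(g\circ f)+X$ forward along $f$ to get $f(T)\subseteq{\rm Ker}\,g+f(X)$, invoke the $f(T)$-sa-small hypothesis on ${\rm Ker}\,g$, and then use injectivity of $f$ to obtain $(f(T):_{R}K)\subseteq(T:_{R}N)$. Your write-up is in fact slightly more careful than the paper's, in that you explicitly note $f(X)\neq0$ (as required by the definition), record the equality ${\rm Ann}(X)={\rm Ann}(f(X))$ rather than merely the inclusion, and cite the smallness-transfer remark explicitly.
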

\begin{proof}
Assume that $T\subseteq {\rm Ker} (g\circ f)+X$ for some submodule $X$ of $N$. We show that ${\rm Ann}(X)\ll (T:N)$. Since ${\rm Ker}(g\circ f)=f^{-1}({\rm Ker}g)$ hence 
\[f(T)\subseteq f({\rm Ker} (g\circ f))+f(X)\subseteq {\rm Ker} g+f(X).\]
Since ${\rm Ker} g\ll ^{sa}_{f(T)}K$, hence 
\[{\rm Ann}(X)\subseteq {\rm Ann}(f(X))\ll (f(T):_{R}K).\]
We have $(f(T):_{R}K)\subseteq (T:_{R}N)$, because if $r\in (f(T):_{R}K)$ and $x\in N$, then $rf(x)=f(rx)\in f(T)$. Since $f$ is monomorphism hence $rx\in f^{-1}(f(T))=T$.
It concludes that $r\in (T:N)$. Therefore ${\rm Ann}(X)\subseteq {\rm Ann}(f(X))\ll (T:_{R}N)$ and the proof is complete.
\end{proof}
We recall that an $R$-module $F$ is called {\it flat} if whenever $N\rightarrow K\rightarrow L$ is an exact sequence of $R$-modules, then $F\otimes N\rightarrow F\otimes K\rightarrow F\otimes L$ is an exact sequence as well.
An $R$-module $F$ is called  {\it faithfully flat}, whenever $N\rightarrow K\rightarrow L$ is an exact sequence of $R$-modules if and only if $F\otimes N\rightarrow F\otimes K\rightarrow F\otimes L$ is an exact sequence.
\begin{thm}
Let $F$ be a faithfully flat $R$-module and let $M$ be an $R$-module. Assume that $N\leq M$ and $T$ is an arbitrary submodule of $M$. Then the following statements hold.
\begin{itemize}
\item[(i)] $N$ is a sa-small submodule of $M$ if and only if $F\otimes N$ is a sa-small submodule of $F\otimes M$. 
\item[(ii)] $N$ is a $T$-sa-small submodule of $M$ if and only if $F\otimes N$ is a $F\otimes T$-sa-small submodule of 
$F\otimes M$. 
\end{itemize}
\end{thm}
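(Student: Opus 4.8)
The proof reduces to a transfer dictionary between $M$ and $F\otimes M$, plus one trick to handle the fact that submodules of $F\otimes M$ need not be extended from $M$. First I would record four standard consequences of $F$ being faithfully flat. (1)~$F\otimes_R(-)$ is exact and faithful, so it reflects zero objects and zero morphisms; in particular $X=0\iff F\otimes X=0$, and a homomorphism $\varphi$ is injective (resp.\ surjective) iff $F\otimes\varphi$ is. (2)~For submodules $A,B\leq M$, flatness makes $F\otimes A$ a submodule of $F\otimes M$, with $F\otimes(A+B)=F\otimes A+F\otimes B$, $F\otimes(A\cap B)=(F\otimes A)\cap(F\otimes B)$ (finite intersections), $F\otimes(M/A)=(F\otimes M)/(F\otimes A)$; combined with~(1) this gives $A\subseteq B\iff F\otimes A\subseteq F\otimes B$, $A+B=M\iff F\otimes A+F\otimes B=F\otimes M$, and $T\subseteq A+B\iff F\otimes T\subseteq F\otimes A+F\otimes B$. (3)~Applying~(1) to multiplication $a\cdot\colon A\to A$ gives $\Ann_R(F\otimes A)=\Ann_R(A)$ for every $A$; in particular $(F\otimes T:_{R}F\otimes M)=\Ann_R(F\otimes(M/T))=\Ann_R(M/T)=(T:_{R}M)$, so \emph{the ambient ideal occurring in the definition of $T$-sa-smallness is unchanged under $F\otimes(-)$}. (4)~For $a\in R$, flatness gives $(F\otimes M)[a]:=\ker(a\cdot\colon F\otimes M\to F\otimes M)=F\otimes M[a]$ with $M[a]:=(0:_{M}a)$, so this particular submodule of $F\otimes M$ \emph{is} extended from $M$ --- the key point below.

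\textbf{The ``if'' directions} are then immediate. Assume $F\otimes N\ll^{sa}F\otimes M$ (resp.\ $F\otimes N\ll^{sa}_{F\otimes T}F\otimes M$). For~(i): given $L\leq M$ with $N+L=M$, item~(2) gives $F\otimes N+F\otimes L=F\otimes M$, so $\Ann_R(F\otimes L)\ll R$, hence $\Ann_R(L)=\Ann_R(F\otimes L)\ll R$ by~(3); thus $N\ll^{sa}M$. For~(ii): given a nonzero $X\leq M$ with $T\subseteq N+X$, item~(1) makes $F\otimes X$ nonzero and~(2) gives $F\otimes T\subseteq F\otimes N+F\otimes X$, so $\Ann_R(F\otimes X)\ll(F\otimes T:F\otimes M)=(T:M)$, i.e.\ $\Ann_R(X)=\Ann_R(F\otimes X)\ll(T:M)$ by~(3); thus $N\ll^{sa}_{T}M$.

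\textbf{The ``only if'' direction of~(i)} is the first place real work is needed, precisely because an arbitrary submodule $H\leq F\otimes M$ with $F\otimes N+H=F\otimes M$ need not be extended. I would argue: it suffices to show $\Ann_R(H)\subseteq\mathfrak m$ for every maximal ideal $\mathfrak m$ of $R$ (then $\Ann_R(H)\subseteq{\rm J}(R)$, i.e.\ $\Ann_R(H)\ll R$). Fix $\mathfrak m$, suppose $\Ann_R(H)\nsubseteq\mathfrak m$, and pick $a\in\Ann_R(H)\setminus\mathfrak m$. Since $aH=0$, item~(4) gives $H\subseteq(F\otimes M)[a]=F\otimes M[a]$, so
\[F\otimes M=F\otimes N+H\subseteq F\otimes N+F\otimes M[a]=F\otimes\bigl(N+M[a]\bigr),\]
and faithful flatness forces $N+M[a]=M$. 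Then $N\ll^{sa}M$ gives $\Ann_R(M[a])\ll R$, hence $\Ann_R(M[a])\subseteq{\rm J}(R)\subseteq\mathfrak m$; but $a\,M[a]=0$, so $a\in\Ann_R(M[a])\subseteq\mathfrak m$ --- a contradiction. Hence $\Ann_R(H)\subseteq\mathfrak m$ for all $\mathfrak m$, as required.

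\textbf{The ``only if'' direction of~(ii)} runs along the same lines; here the case $T=0$ is vacuous (a nonzero module has no $0$-sa-small submodule), so assume $T\neq0$, and let $Y\leq F\otimes M$ be nonzero with $F\otimes T\subseteq F\otimes N+Y$. I must show $\Ann_R(Y)\ll(T:M)$, which in particular includes $\Ann_R(Y)\subseteq(T:M)$. For the containment: for each $a\in\Ann_R(Y)$, item~(4) gives $Y\subseteq F\otimes M[a]$, hence $F\otimes T\subseteq F\otimes(N+M[a])$, so $T\subseteq N+M[a]$; since $T\nsubseteq N$ (as $N\ll^{sa}_{T}M$ and $T\neq0$), $M[a]\neq0$, and then $N\ll^{sa}_{T}M$ yields $\Ann_R(M[a])\ll(T:M)$, so $\Ann_R(M[a])\subseteq(T:M)$ and, since $a\in\Ann_R(M[a])$, $a\in(T:M)$; thus $\Ann_R(Y)\subseteq(T:M)$. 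For the smallness, when $\Ann_R(Y)=Ra_1+\cdots+Ra_n$ is finitely generated (e.g.\ $R$ Noetherian), put $M':=M[\Ann_R(Y)]=\bigcap_{i=1}^{n}M[a_i]$; since $F\otimes(-)$ commutes with finite intersections, $F\otimes M'=\bigcap_{i=1}^{n}F\otimes M[a_i]\supseteq Y$, so $T\subseteq N+M'$ with $M'\neq0$, whence $\Ann_R(M')\ll(T:M)$; as $\Ann_R(Y)\subseteq\Ann_R(M')$ and a submodule of a small submodule is small, $\Ann_R(Y)\ll(T:M)$. \textbf{Main obstacle.} The genuinely delicate point throughout is that submodules of $F\otimes M$ are not of the form $F\otimes(-)$; this is exactly what the $a$-torsion identity~(4) sidesteps, and it makes~(i) go through cleanly. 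In~(ii) there are two further wrinkles: ``$\ll(T:M)$'' is strictly weaker than ``$\ll R$'', and $\Ann_R(Y)$ need not be finitely generated over a non-Noetherian $R$ --- where $F\otimes(-)$ fails to commute with the infinite intersection $\bigcap_{a\in\Ann_R(Y)}M[a]$ --- so the last step would have to be replaced by a more careful directed-union argument; that is where I expect to spend most of the effort.
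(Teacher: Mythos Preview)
Your ``if'' directions coincide with the paper's argument.  For the ``only if'' directions your route is genuinely different and, in fact, more careful than the paper's.  The paper proves $(\Rightarrow)$ in both (i) and (ii) by testing \emph{only} submodules of $F\otimes M$ of the extended form $F\otimes K$: it writes ``let $F\otimes N+F\otimes K=F\otimes M$ for some submodule $F\otimes K$ of $F\otimes M$'' and then descends via faithful flatness.  As you correctly observe, an arbitrary submodule $H\leq F\otimes M$ need not be extended, so the paper's verification is incomplete on its face.  Your $a$-torsion trick --- replacing $H$ by the extended overmodule $F\otimes M[a]=(F\otimes M)[a]$ and exploiting $a\in\Ann_R(M[a])$ --- closes this gap cleanly for (i) and yields a full proof there.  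For (ii) you recover the result when $\Ann_R(Y)$ is finitely generated (hence over Noetherian $R$) and honestly flag the remaining difficulty in the general case; the paper does not confront this issue at all, since it never leaves the extended-submodule setting.  In short: your approach is the right one, and the obstacle you isolate in (ii) is real and not addressed by the paper's proof either.
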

\begin{proof}
(i) ($\Rightarrow $) Let $N\leq ^{sa}M$ and $F\otimes N+F\otimes K=F\otimes M$ for some submodule $F\otimes K$ of $F\otimes M$. Then $F\otimes (N+K)=F\otimes N+F\otimes K=F\otimes M$. Thus $0\rightarrow F\otimes (N+K)\rightarrow 
F\otimes M\rightarrow 0$ is an exact sequence. Since $F$ is faithfully flat, so $0\rightarrow N+K\rightarrow M\rightarrow 0$ is an exact sequence. Therefore $N+K=M$ and this implies that ${\rm Ann}(K)\ll R$ since $N\leq ^{sa}M$. We have 
${\rm Ann}(K)={\rm Ann}(F\otimes K)$, because if $r\in {\rm Ann}(K)$, then $rK=0$. Thus $r(F\otimes K)=F\otimes rK=0$ and so $r\in {\rm Ann}(F\otimes K)$. If $r\in {\rm Ann}(F\otimes K)$, then $0\rightarrow F\otimes rK\rightarrow 0$ is an exact sequence and so $0\rightarrow rK\rightarrow 0$ is exact since $F$ is a faithfully flat $R$-module. Therefore $rK=0$ and so $r\in {\rm Ann}(K)$. It concludes that ${\rm Ann}(F\otimes K)\ll R$, so $F\otimes N\leq ^{sa}F\otimes M$.\\
($\Leftarrow $) Let $N+K=M$ for some submodule $K$ of $M$. Then
\[F\otimes (N+K)=F\otimes N+F\otimes K=F\otimes M.\]
Hence ${\rm Ann}(K)={\rm Ann}(F\otimes K)\ll R$. Therefore $N\leq ^{sa}M$, as needed. \\
\\
(ii) ($\Rightarrow $) Let $F\otimes T\subseteq F\otimes N+F\otimes K$ for some submodule $F\otimes K$ of $F\otimes M$. Thus
$F\otimes T\subseteq F\otimes (N+K)$ and so $0\rightarrow F\otimes T\rightarrow F\otimes (N+K)$ is exact. Therefore 
$0\rightarrow T\rightarrow N+K$ is also exact since $F$ is a faithfully flat $R$-module. This implies that $T\subseteq N+K$ and since $N\ll ^{sa}_{T}M$ hence ${\rm Ann}(K)\ll (T:_{R}M)$. It is easy to see that $(T:_{R}M)=(F\otimes T:_{R}F\otimes M)$ since $F$ is faithfully flat. 
Hence ${\rm Ann}(F\otimes K)\ll (F\otimes T:_{R}F\otimes M)$  and so $F\otimes N\ll _{F\otimes T}^{sa}F\otimes  M$.\\
($\Leftarrow $) Let $T\subseteq N+K$. Then $F\otimes T\subseteq F\otimes (N+K)=F\otimes N+F\otimes K$. Now since $F\otimes N\ll _{F\otimes T}^{sa}F\otimes  M$ hence ${\rm Ann}(F\otimes K)\ll (F\otimes T:_{R}F\otimes M)$. It concludes that ${\rm Ann}(K)\ll (T:_{R}M)$ and so $N\ll ^{sa}_{T}M$.
\end{proof}

\bigskip
\bigskip

{\footnotesize {\bf Saeed Rajaee}\; \\ {Department of Mathematics}, { Payame Noor University, P.O.Box 19395-3697}, {Tehran, Iran.}\\
{\tt Email: saeed\_ rajaee@pnu.ac.ir}\\
{\footnotesize {\bf Farkhondeh Farzalipour}\; \\ {Department of Mathematics}, {Payame Noor University, P.O.Box 19395-3697}, {Tehran, Iran.}\\
{\tt Email: f\_ farzalipour@pnu.ac.ir}\\
{\footnotesize {\bf Marzieh Poyan}\; \\ {Department of Mathematics, PhD student of Mathematical Sciences}, {Payame Noor University, P.O.Box 19395-3697}, {Tehran, Iran.}\\
{\tt Email: r\_ poyan@pnu.ac.ir}

\end{document}